\begin{document}
\title{Cleavability over ordinals}
\author{Shari Levine}
\address{Mathematical Institute, University of Oxford, Oxford, OX1 3LB}
\begin{abstract} In this paper we show that if $X$ is an infinite compactum cleavable over an ordinal, then $X$ must be homeomorphic to an ordinal. $X$ must also therefore be a LOTS. This answers two fundamental questions in the area of cleavability. We also leave it as an open question whether cleavability of an infinite compactum $X$ over an ordinal $\lambda$ implies $X$ is embeddable into $\lambda$. \end{abstract}
\begin{keyword} Cleavability \sep linearly ordered topological space (LOTS) \sep ordinal \sep homeomorphism \MSC[2010] 54C05 \sep 54A20 \sep 03E10 \end{keyword}

\maketitle
\newtheorem{thm}{Theorem}[section]
\newtheorem{lemma}[thm]{Lemma}
\newtheorem{definition}[thm]{Definition}
\newtheorem{obs}[thm]{Observation}
\newtheorem{cor}[thm]{Corollary}
\newtheorem{question}{Question}
\newtheorem{claim}[thm]{Claim}
\newtheorem{ex}[thm]{Example}
\newtheorem{prop}[thm]{Proposition}
\newtheorem{cex}[thm]{Counter-Example}

\newcommand{\reals}{\mathbb{R}}
\newcommand{\cf}{\textnormal{cf}}
\newcommand{\pcf}{\textnormal{pcf}}
\newcommand{\CB}{\textnormal{CB}}

\section*{Introduction}

A space $X$ is said to be \textit{cleavable} (or splittable) over a space $Y$ \textit{along} $A \subseteq X$ if there exists a continuous $f: X \rightarrow Y$ such that $f(A) \cap f(X \setminus A) = \emptyset$. A space $X$ is \textit{cleavable over} $Y$ if it is cleavable over $Y$ along all $A \subseteq X$. The topic was introduced by A.~V.~Arhangel'ski\u\i\ and D.~B.~Shakh\-matov in \cite{splitting}, but it was in \cite{Arhangelskii2} that two of the main questions were stated:

\begin{question}\label{qone} Let $X$ be an infinite compactum cleavable over a linearly ordered topological space (LOTS) $Y$. Is $X$ homeomorphic to a subspace of $Y$? \end{question}

\begin{question}\label{qtwo} Let $X$ be an infinite compactum cleavable over a LOTS. Is $X$ a LOTS? \end{question}

Results related to these questions can be found in, but are not limited to, the following papers: \cite{Arhangelskii2}, \cite{Arhangelskii}, \cite{buzyakova}.

In this paper, we show that if $X$ is an infinite compactum cleavable over an ordinal, then $X$ must be homeomorphic to an ordinal. These results supplement those in [Levine, \textit{Cleavability over scattered first-countable LOTS}, unpublished], which concerns cleavability over ordinals less than or equal to $\omega_1$. 

Providing positive answers to Questions~\ref{qone} and \ref{qtwo} allows for the comprehension of a difficult space $X$ by associating it with a more well-known and well-understood space: in this case an uncountable ordinal. 

Further, several papers have been devoted to describing the necessary and sufficient conditions for the linear orderability of a space $X$. (See \cite{os} and \cite{lb} for examples.) The results of this paper provide an alternative characterization by which we may show when an infinite compactum is linearly orderable. 

Additionally, another popular area of research is the characterization of those spaces that are homeomorphic to an ordinal (see \cite{contributions}, \cite{D}, and \cite{moran} for examples.) The results of this paper adds another characterization of those spaces homeomorphic to an ordinal. 

The novelty of these characterizations is that, whereas the results of \cite{os}, \cite{lb}, \cite{contributions}, \cite{D}, and \cite{moran} rely on topological properties of a space $X$, our new characterizations rely on finding an appropriate ordinal $\lambda$, and an appropriate subset of $\mathscr{C}(X,\lambda)$. This shifts the focus from a topological exercise, to a functional one.

This paper is written in four sections. In the first section, we provide introductory definitions, observations and lemmas. The most important of these is Theorem~\ref{amain}; in this theorem, we show that any compact $X$ cleavable over an ordinal $\lambda$ such that $X$ ``hereditarily has a spine'' must be homeomorphic to an ordinal. (A definition for this property is provided in Definition~\ref{hspine}.) In the second and third sections, we show that every compact $X$ cleavable over an ordinal must hereditarily have a spine. In the fourth section, we provide an answer to Questions~\ref{qone} and \ref{qtwo}. The second and third sections of this paper are heavily technical. The main result of this paper, that $X$ is homeomorphic to an ordinal $Y$, is stated and proven in Theorem~\ref{othm}.

\section{Introductory Proofs}

In this section we provide introductory definitions, observations, and lemmas. The most important lemma of this section is Theorem~\ref{amain}, in which we show that every infinite compactum that ``hereditarily has a spine'' (see Definition~\ref{hspine}), and is cleavable over an ordinal, must be homeomorphic to an ordinal. This provides the foundation for the rest of the paper, in which we prove that every infinite compactum cleavable over an ordinal must hereditarily have a spine.  

We begin by stating several well-known definitions, observations, and lemmas. The first theorem is from \cite{Arhangelskii}. 

\begin{thm} If $X$ is a compactum cleavable over a $T_2$ space, then $X$ is $T_2$. \end{thm}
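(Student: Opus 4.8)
The plan is to verify the Hausdorff property of $X$ directly, one pair of points at a time, by applying cleavability along a singleton. Fix distinct points $x, y \in X$. Since $X$ is cleavable over the $T_2$ space $Y$, it is in particular cleavable over $Y$ along the set $A = \{x\}$, so there is a continuous map $f : X \to Y$ with $f(\{x\}) \cap f(X \setminus \{x\}) = \emptyset$; equivalently, $f(x) \notin f(X \setminus \{x\})$.

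Because $y \neq x$ we have $y \in X \setminus \{x\}$, hence $f(y) \in f(X \setminus \{x\})$, and therefore $f(x) \neq f(y)$. Now use the Hausdorffness of $Y$: choose disjoint open sets $U, V \subseteq Y$ with $f(x) \in U$ and $f(y) \in V$. Pulling back along the continuous map $f$, the sets $f^{-1}(U)$ and $f^{-1}(V)$ are open in $X$, are disjoint since $U \cap V = \emptyset$, and contain $x$ and $y$ respectively. As $x$ and $y$ were arbitrary distinct points of $X$, this shows $X$ is $T_2$.

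I do not anticipate a real obstacle here; the argument is essentially a one-liner once the right set $A$ is chosen. The only points meriting a moment's care are that the definition of cleavability legitimately applies to the one-point set $A=\{x\}$ (it quantifies over all $A \subseteq X$), and that separating the two image points $f(x)$ and $f(y)$ needs nothing beyond the Hausdorffness of $Y$ — in particular we do not need, nor does cleavability supply, the stronger fact that $f(x) \notin \overline{f(X \setminus \{x\})}$. It is also worth remarking that compactness of $X$ is not used anywhere in this proof; the statement is phrased for compacta merely because that is the standing setting of the paper.
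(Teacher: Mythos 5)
Your argument is correct: cleaving along the singleton $\{x\}$ forces $f(x)\neq f(y)$ for every $y\neq x$, and pulling back disjoint open neighbourhoods of $f(x)$ and $f(y)$ through the continuous $f$ separates $x$ and $y$ in $X$. Note that the paper itself offers no proof of this statement—it is quoted from \cite{Arhangelskii}—so there is nothing internal to compare against; your proof is the standard one. Your closing remarks are also accurate: only cleavability along singletons is used, compactness of $X$ plays no role, and the weaker conclusion $f(x)\notin f(X\setminus\{x\})$ (rather than exclusion from the closure) suffices, since disjointness of the pulled-back neighbourhoods comes from $U\cap V=\emptyset$ in $Y$, not from any closure condition in the image.
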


\begin{lemma} \label{scattered} If $X$ is a compactum cleavable over a scattered space $Y$, then $X$ must be scattered. \end{lemma}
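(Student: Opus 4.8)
The plan is to argue by contradiction: suppose $X$ is a compactum cleavable over a scattered space $Y$ and that $X$ is \emph{not} scattered, and produce a single subset $A\subseteq X$ over which $X$ cannot be cleaved into $Y$. Two things I would set up first. (i) Cleavability of $X$ over $Y$ along $A$ is equivalent to the existence of a continuous $g\colon X\to Y$ with $g^{-1}g(A)=A$, i.e.\ with $A$ a union of fibres of $g$. (ii) Every subspace of a scattered space is scattered, and a nonempty scattered space has an isolated point. I would also note that we may take $X$ to be Hausdorff: by the preceding theorem this is automatic when $Y$ is $T_2$, and in every use of this lemma in the paper $Y$ is an ordinal.

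Next I would import a copy of the unit interval. By the classical theorem of Rudin, a compact Hausdorff space that is not scattered admits a continuous surjection $h\colon X\to[0,1]$. I would then shrink the domain by Zorn's lemma to a closed $X_0\subseteq X$ minimal with $h(X_0)=[0,1]$, so that $h_0:=h|_{X_0}\colon X_0\to[0,1]$ is an \emph{irreducible} continuous surjection of compacta. The point of irreducibility is that $h_0$ pulls dense sets back to dense sets: if $D$ is dense in $[0,1]$ and $U\subseteq X_0$ is nonempty open, then $h_0(X_0\setminus U)$ is closed in $[0,1]$ and, by irreducibility, proper, so it misses some $d\in D$, whence $\emptyset\neq h_0^{-1}(d)\subseteq U\cap h_0^{-1}(D)$.

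To finish, I would take $A=h_0^{-1}(\mathbb{Q}\cap[0,1])$ and $B=h_0^{-1}([0,1]\setminus\mathbb{Q})$ as subsets of $X$; these partition $X_0$, and since $\mathbb{Q}\cap[0,1]$ and its complement are both dense in $[0,1]$, both $A$ and $B$ are dense in $X_0$. Applying cleavability of $X$ over $Y$ to $A$, I get a continuous $g\colon X\to Y$ with $g(A)\cap g(X\setminus A)=\emptyset$; since $B\subseteq X\setminus A$, also $g(A)\cap g(B)=\emptyset$. Setting $W=g(X_0)\subseteq Y$, we have $W=g(A)\cup g(B)$, and since $g$ maps $X_0$ continuously onto $W$ with $A$ and $B$ each dense in $X_0$, each of $g(A),g(B)$ is dense in $W$. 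Hence $W$ has no isolated point (such a point would lie in $g(A)\cap g(B)=\emptyset$); but $W$ is a nonempty subspace of the scattered space $Y$, so $W$ is scattered and does have an isolated point --- a contradiction.

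The main obstacle, I expect, is the appeal to Rudin's theorem --- equivalently, the fact that a non-scattered compactum contains a nonempty perfect subset and that such a set maps continuously onto $[0,1]$. Everything after that (the reduction to an irreducible map and the density bookkeeping) is routine, and the final contradiction is immediate.
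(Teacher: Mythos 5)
Your proof is correct, and its skeleton is the same as the paper's: exhibit two disjoint dense subsets of a compact subspace of $X$, note that their images under any cleaving map are both dense in a nonempty subspace of the scattered $Y$, and derive a contradiction from the isolated point that subspace must have. Where you differ is in how the disjoint dense pair is produced. The paper takes the closure of a dense-in-itself subset, observes it is perfect compact Hausdorff, and cites a resolvability theorem (Hewitt-style: crowded compact Hausdorff spaces are resolvable) to split it into two disjoint dense pieces. You instead invoke the classical characterization (usually credited to Pe{\l}czy\'nski--Semadeni rather than Rudin, though the results are close cousins) that a non-scattered compactum maps continuously onto $[0,1]$, restrict to an irreducible surjection, and pull back $\mathbb{Q}$ and its complement; the irreducibility argument showing preimages of dense sets are dense is exactly right. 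Your route uses heavier external machinery for what is ultimately the same resolvability fact, but it has the virtue of making explicit the final step --- that the common image is a nonempty scattered subspace and hence has an isolated point lying in both dense images --- which the paper compresses into the unjustified sentence ``No function can cleave apart $A$ and $B$.'' Both proofs are sound; yours is more self-contained at the final step and more dependent on a big theorem at the first step.
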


\begin{proof} Assume for a contradiction that $X$ contains a dense-in-itself subset $D$, and consider $\overline{D}$. We know $\overline{D}$ is compact $T_2$, and perfect, therefore by \cite{resolvable} it is resolvable. Let $\overline{D} = A \cup B$, where $\overline{A} = \overline{B} = \overline{D}$, and $A \cap B = \emptyset$. No function can cleave apart $A$ and $B$, thus $X$ cannot contain a dense-in-itself subset.\end{proof}

\begin{definition} For ordinal numbers $\alpha$, the $\alpha$-th \textbf{derived set} of a topological space $X$ is defined by transfinite induction as follows:
\begin{itemize}
\item $X^0 = X$
\item $X^{\alpha+1} = (X^{\alpha})'$
\item $\displaystyle X^{\lambda} = \bigcap_{\alpha < \lambda} X^{\alpha}$ for limit ordinals $\lambda$.\end{itemize}

The smallest ordinal $\alpha$ such that $X^{\alpha+1} = X^{\alpha}$ is called the \textbf{Cantor-Bendixson rank} of $X$, written as $\CB(X)$. 
Lastly, let \textbf{$I_{\beta}(X)$} $= X^{\beta} \setminus X^{\beta+1}$. \end{definition}

The following observations are well known, but may also be found in \cite{sierpinski}.

\begin{obs} For a scattered space $X$, the Cantor-Bendixson rank is the least ordinal $\mu$ such that $X^{\mu}$ is empty. \end{obs}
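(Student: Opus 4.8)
The plan is to track the non-increasing sequence $\langle X^{\alpha}\rangle$ of derived sets, show it stabilises, and then identify its eventual value using scatteredness. First I would verify by transfinite induction that every $X^{\alpha}$ is closed in $X$: the base case and the limit case (an intersection of closed sets) are immediate, and for the successor case one uses that the derived set of a closed set is again closed, together with the containment $A' \subseteq \overline{A}$, which also shows $X^{\alpha+1} = (X^{\alpha})' \subseteq X^{\alpha}$. Thus $\langle X^{\alpha}\rangle$ is non-increasing; if $X^{\alpha+1} \subsetneq X^{\alpha}$ held for every ordinal $\alpha$, then choosing $x_{\alpha} \in X^{\alpha}\setminus X^{\alpha+1}$ for each $\alpha < |X|^{+}$ would inject $|X|^{+}$ into $X$, which is impossible. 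Hence $X^{\alpha+1} = X^{\alpha}$ for some $\alpha$, so $\CB(X)$ is well defined and $X^{\CB(X)} = X^{\CB(X)+1}$.

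Next I would set $S := X^{\CB(X)}$ and observe that $S = X^{\CB(X)+1} = S'$, so every point of $S$ is a limit point of $S$; equivalently, the subspace $S$ has no isolated point. This is the one place the hypothesis is used: since $X$ is scattered it has no nonempty dense-in-itself subspace, so $S$ must be empty. Consequently the class of ordinals $\alpha$ with $X^{\alpha} = \emptyset$ is nonempty, and $\CB(X)$ belongs to it; let $\mu$ be its least element, so that $\mu \le \CB(X)$.

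Finally I would prove the reverse inequality. From $X^{\mu} = \emptyset$ it follows at once that $X^{\mu+1} = \emptyset = X^{\mu}$, so $\mu$ lies in the set of ordinals $\alpha$ with $X^{\alpha+1} = X^{\alpha}$, whose least element is by definition $\CB(X)$; hence $\CB(X) \le \mu$. Combining the two inequalities yields $\CB(X) = \mu$, which is precisely the claim that the Cantor--Bendixson rank is the least ordinal $\mu$ with $X^{\mu}$ empty. I do not expect a genuine obstacle: the only points deserving care are the monotonicity of the derived-set operation on closed sets (needed so that both the stabilisation argument and the scatteredness argument apply) and the correct treatment of limit stages.
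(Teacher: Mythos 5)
Your argument is correct and is the standard one: the derived sets form a decreasing sequence of closed sets that stabilises by a cardinality bound, the stable set $X^{\CB(X)}$ is dense-in-itself and hence empty by scatteredness, and the minimality of $\CB(X)$ among ordinals $\alpha$ with $X^{\alpha+1}=X^{\alpha}$ gives the reverse inequality. The paper states this observation without proof (citing it as well known), and your write-up supplies exactly the expected details, including the correct handling of limit stages.
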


\begin{obs} \label{o2} If $X$ is a compact scattered topological space, then the Cantor-Bendixson rank of $X$ must be a successor ordinal. \end{obs}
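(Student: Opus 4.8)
The plan is to argue by contradiction, leaning on the compactness of $X$ together with the characterization of $\CB(X)$ recorded in the previous observation. Suppose $\mu = \CB(X)$ were not a successor ordinal. Since $X$ is scattered, the previous observation identifies $\mu$ as the least ordinal with $X^{\mu} = \emptyset$; in particular $X^{\alpha} \neq \emptyset$ for every $\alpha < \mu$. First I would dispose of the degenerate case $\mu = 0$: this forces $X = X^{0} = \emptyset$, which we exclude by taking $X$ to be nonempty (as is implicit throughout). Thus the only remaining possibility, and the one to be ruled out, is that $\mu$ is a limit ordinal.

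So assume $\mu$ is a limit. The family $\{X^{\alpha} : \alpha < \mu\}$ is $\subseteq$-decreasing, since $X^{\beta+1} = (X^{\beta})' \subseteq X^{\beta}$ and $X^{\lambda} = \bigcap_{\alpha < \lambda} X^{\alpha} \subseteq X^{\beta}$ for $\beta < \lambda$. Moreover each $X^{\alpha}$ with $\alpha < \mu$ is a closed subset of $X$: successor stages $(X^{\beta})'$ are closed because derived sets are closed, and limit stages are intersections of closed sets, hence closed. Each $X^{\alpha}$ ($\alpha < \mu$) is also nonempty and, being closed in the compactum $X$, is compact. A $\subseteq$-decreasing family of nonempty closed subsets of a compact space has the finite intersection property, so by compactness $\bigcap_{\alpha < \mu} X^{\alpha} \neq \emptyset$. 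But $\bigcap_{\alpha < \mu} X^{\alpha} = X^{\mu}$ by the definition of the derived set at a limit stage, and $X^{\mu} = \emptyset$ — a contradiction. Therefore $\mu$ is a successor ordinal.

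The argument is essentially immediate once these pieces are assembled; the only point requiring any attention is that each $X^{\alpha}$ must be closed in $X$ so that the compactness argument applies, which uses that $X$ is $T_1$ — unproblematic here, since a compactum cleavable over a $T_2$ space is $T_2$ by the first theorem of this section. I therefore do not anticipate any genuine obstacle.
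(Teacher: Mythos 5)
Your argument is correct, and it is the standard one. Note that the paper does not actually prove this observation --- it is stated as well known with a citation to Sierpi\'nski --- so there is no proof of record to compare against; your compactness argument (the decreasing family $\{X^{\alpha}: \alpha<\mu\}$ of nonempty closed sets has the finite intersection property, so $X^{\mu}=\bigcap_{\alpha<\mu}X^{\alpha}\neq\emptyset$ when $\mu$ is a limit, contradicting the characterization of $\CB(X)$ for scattered spaces) is exactly the intended folklore proof. Two small remarks. First, the caveat about needing $T_1$ is unnecessary: although the derived set of an \emph{arbitrary} subset need not be closed without $T_1$, each $X^{\alpha}$ is closed in any topological space, since for a closed set $C$ the relatively isolated points of $C$ form an open subset of $C$, so $C'=C\setminus\mathrm{Iso}(C)$ is closed in $C$ and hence in $X$; limit stages are intersections of closed sets. (Appealing to the cleavability theorem for $T_2$ would also be illegitimate here anyway, since the observation as stated makes no cleavability hypothesis.) Second, you are right that nonemptiness must be assumed to exclude $\mu=0$; this is harmless, as the paper only applies the observation to infinite compacta.
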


It follows from these observations that if $X$ is compact and scattered, and $\CB(X) = \beta+1$, then $X^{\beta}$ is the last non-empty derived set of $X$. By compactness, $X^{\beta} < \omega$; from this, we have the following definitions:

\begin{definition} Let $X$ be a compact scattered space. If $\CB(X) = \beta+1$, we say $X$ is \textbf{simple} if $|X^{\beta}| = 1$. We say $X$ is \textbf{simple with $\hat{x}$} if $\hat{x}$ is the only element of $X^{\beta}$. \end{definition}

\begin{definition} Let $X$ be a scattered topological space. For $x \in X$, we use $\CB^*(x)$ to be the greatest ordinal $\beta$ such that $x \in X^{\beta}$. \end{definition}

\begin{definition} Let $X$ be a compact scattered space. If $X^{\beta}$ is finite and contains exactly $n$-many points, the pair $(\beta,n)$ is called the \textbf{characteristic} of $X$. \end{definition}

The following Lemma is also easily provable, but may be found in \cite{D}:

\begin{lemma} \label{ord} Any closed subset of an ordinal is homeomorphic to an ordinal. \end{lemma}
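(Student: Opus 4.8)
The plan is to identify the ordinal directly. Since $C$ is a subset of the ordinal $\lambda$, the order it inherits is a suborder of a well-order, hence itself a well-order, so $C$ is order-isomorphic to a unique ordinal $\gamma=\ord(C)$; let $e\colon\gamma\to C$ be the order isomorphism. It then suffices to show (i) that the subspace topology $C$ inherits from $\lambda$ coincides with the order topology of $C$ regarded as an abstract linearly ordered set, and (ii) that $e$ is then a homeomorphism. Point (ii) is the standard fact that an order isomorphism between linearly ordered topological spaces sends open rays bijectively to open rays, and open rays form a subbasis, so $e$ and $e^{-1}$ are continuous.

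For point (i), the order topology on $C$ is always coarser than the subspace topology, so only the reverse inclusion needs argument. A subbasic subspace-open subset of $C$ has the form $C\cap(a,\rightarrow)$ or $C\cap(\leftarrow,b)$ with $a,b\in\lambda$, and a basic one is an intersection $C\cap(a,b)$ of two such. The single place where closedness of $C$ enters is the observation: for every $a\in\lambda$, if $\{c\in C:c\le a\}$ is nonempty then it has a maximum, since its supremum is $\le a$ and, were it not attained, it would be a limit point of $C$ not lying in $C$; dually $\{c\in C:c\ge a\}$ always has a minimum, $C$ being well-ordered. Given $x\in C\cap(a,\rightarrow)$: if no element of $C$ is $\le a$ the set is all of $C$, and otherwise, with $a'=\max\{c\in C:c\le a\}\in C$, the subbasic order-open ray $(a',\rightarrow)_C$ contains $x$ and lies inside $C\cap(a,\rightarrow)$. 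For $x\in C\cap(\leftarrow,b)$, write $T=\{c\in C:c<b\}$ and split: if $\sup T=b$ (so $T$ has no maximum) pick $c\in T$ with $x<c$ and use the order-open ray $(\leftarrow,c)_C\subseteq T$; if $\sup T<b$ then $T$ has a maximum $t^{*}$, and $\{c\in C:c>t^{*}\}$ equals $[b',\rightarrow)_C$ with $b'=\min\{c\in C:c\ge b\}$, which is order-closed, so $(\leftarrow,t^{*}]_C$ is order-open and contains $x$ inside $T$. Intersecting a left neighbourhood with a right neighbourhood yields, for each point of $C\cap(a,b)$, an order-open neighbourhood contained in it; hence the two topologies agree, and by (ii) we get $C\cong\gamma$.

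I expect the only delicate part to be the bookkeeping in the second paragraph: one must keep in mind that a subset of a LOTS need not be a LOTS in its subspace topology, so the argument genuinely uses that $C$ is closed, and one must handle the cofinal-versus-bounded dichotomy at the right endpoint correctly. An alternative would be transfinite induction on $\ord(C)$, peeling off an initial segment at successor stages and unioning at limits, but this forces one to reprove that the pieces glue topologically, so the direct comparison of topologies above seems preferable.
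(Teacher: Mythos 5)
Your proof is correct. Note that the paper itself does not argue this lemma at all: it is stated with a pointer to the literature (the citation preceding Theorem~\ref{D}), so there is no in-paper proof to compare against, and your write-up supplies exactly the standard argument that the citation is standing in for. The two-step structure is sound: the order type $\gamma=\ord(C)$ exists because a subset of a well-order is a well-order, an order isomorphism is automatically a homeomorphism for the respective \emph{order} topologies, and the only real content is your point (i), that for a \emph{closed} $C\subseteq\lambda$ the subspace topology coincides with the order topology of $C$. Your use of closedness is in the right place and is genuinely needed (your implicit awareness that, e.g., $\omega\cup\{\omega+1\}\subseteq\omega+2$ fails this is the correct cautionary example): the supremum of $\{c\in C:c\le a\}$, if not attained, is a limit point of $C$ outside $C$, so closedness forces a maximum $a'$ and gives $C\cap(a,\rightarrow)=(a',\rightarrow)_C$ exactly as you say; the left-ray case likewise works, and in your case $\sup T<b$ you could shortcut slightly by noting $T=(\leftarrow,b')_C$ directly when $\{c\in C:c\ge b\}\neq\emptyset$ (and $T=C$ when it is empty, a trivial case you leave tacit). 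Since open rays subbasically generate the subspace topology, this closes the argument. The alternative you mention (transfinite induction on $\ord(C)$) is indeed messier for the reason you give; the direct comparison of topologies is the cleaner route and is a perfectly acceptable replacement for the omitted proof.
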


We now state an important definition.

\begin{definition} \label{spine} Let $X$ and $Y$ be such that $X$ is infinite, compact, and simple with $\hat{x}$, $Y$ is an infinite ordinal, and $X$ is cleavable over $Y$. We say $X$ has a \textbf{spine} $(S,k)$, where $S$ is a subset of $X \setminus \left\{\hat{x}\right\}$, and $k: X \rightarrow Y$ is a continuous function, if the following properties are satisfied: \begin{enumerate}

\item $k$ cleaves along $S$
\item $k|_S$ is an embedding into $Y$
\item $k(S)$ is club in $k(\hat{x})$. \end{enumerate} \end{definition}

For example, if $k$ were injective on $X$, then $(X \setminus \left\{\hat{x}\right\},k)$ would satisfy this definition. It may aid the reader to think of $(S,k)$ as creating what looks like a spine in $k(X)$.

\begin{definition} \label{hspine} Let $X$ and $Y$ be infinite spaces such that $X$ is compact and simple with $\hat{x}$, $Y$ is an infinite ordinal, and $X$ is cleavable over $Y$. We say $X$ \textbf{hereditarily has a spine} if every closed, infinite, simple $A \subseteq X$ has a spine. \end{definition}

\begin{definition} Let $X$ and $Y$ be such that $X$ is infinite, compact, and simple with $\hat{x}$, $Y$ is an infinite ordinal, and $X$ is cleavable over $Y$. We say $(R,j)$ is a \textbf{semi-spine} of $X$ if $(R,j)$ satisfies properties $(2)$ and $(3)$ of Definition~\ref{spine}. \end{definition}. 

The following definition and theorem may be found in \cite{D}.

\begin{definition} We shall say that a point $x \in X$ satisfies $(D)$ in $X$ if $x$ has a neighborhood base consisting of a decreasing, possibly transfinite, sequence $\left\{U_{\alpha} \right\}_{\alpha < \tau}$ of clopen sets with the additional property that $(\bigcap_{\alpha < \beta} U_{\alpha}) \setminus U_{\beta}$ contains at most one point for each limit ordinal $\beta$ with $\beta < \tau$. \end{definition}

\begin{thm} \label{D} Let $X$ be a compact scattered space with characteristic $(\lambda,n)$. If $X$ has property $(D)$, it is homeomorphic to $(\omega^{\lambda} \cdot n) +1$. \end{thm}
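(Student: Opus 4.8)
The plan is to prove, by transfinite induction on $\lambda\geq 1$, the following slightly more general statement: every compact scattered space with property $(D)$ whose characteristic is $(\lambda,n)$ for some finite $n\geq 1$ is homeomorphic to the ordinal $[0,\omega^{\lambda}\cdot n]$. (The case $\lambda=0$ is degenerate and excluded, since a finite discrete space of $n$ points is not $[0,n]$.) Carrying the parameter $n$ through the induction is convenient because the successor step naturally produces spaces with several top-rank points. We use freely that a compact Hausdorff scattered space is zero-dimensional, and that property $(D)$ is inherited by closed subspaces and having finite characteristic by clopen subspaces. We also use two routine ordinal facts: the topological sum of $n$ copies of $[0,\omega^{\lambda}]$ is homeomorphic to $[0,\omega^{\lambda}\cdot n]$ (because $(\omega^{\lambda}+1)\cdot n=\omega^{\lambda}\cdot n+1$), and a sum of ordinal intervals $[0,\gamma]$ then $[0,\delta]$ has order type $\gamma+1+\delta+1$, which collapses to $\omega^{\rho}\cdot m+1$ precisely when the leading exponent of $\delta$ is $\geq$ that of $\gamma$.

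The first move is to reduce to the \emph{simple} case $n=1$: if $X^{\lambda}=\{p_1,\dots,p_n\}$, zero-dimensionality gives a clopen partition $X=V_1\sqcup\cdots\sqcup V_n$ with $p_i\in V_i$; each $V_i$ then has characteristic $(\lambda,1)$ and property $(D)$, and once each $V_i\cong[0,\omega^{\lambda}]$ the sum fact above gives $X\cong[0,\omega^{\lambda}\cdot n]$. For the base case $\lambda=1$: $X$ is the one-point compactification of the discrete space $X\setminus\{\hat x\}$, so neighbourhoods of $\hat x$ are exactly the cofinite sets, and a short argument shows the ``at most one lost point at limits'' clause of property $(D)$ forces the decreasing clopen base at $\hat x$ to have order type $\leq\omega$; hence $X\setminus\{\hat x\}$ is countable and $X\cong\omega+1$. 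For the successor step $\lambda=\mu+1$ (in the simple case): $X^{\mu}$ is closed with $(X^{\mu})'=\{\hat x\}$, so by the base case $X^{\mu}=\{x_k:k<\omega\}\cup\{\hat x\}$ with $x_k\to\hat x$. I would then fix a property-$(D)$ neighbourhood base $\{U_{\alpha}\}_{\alpha<\tau}$ at $\hat x$ (chosen, using the $x_k$, so that the successive ``annuli'' $U_{\alpha}\setminus U_{\alpha+1}$ are stripped off in non-decreasing order of leading exponent), write $X\setminus\{\hat x\}=\bigcup_{\alpha<\tau}(X\setminus U_{\alpha})$ as an increasing union of clopen sets that is continuous up to one point at limit stages, note each $X\setminus U_{\alpha}$ is compact scattered of characteristic $(\rho_{\alpha},k_{\alpha})$ with $\rho_{\alpha}\leq\mu$ and so $\cong[0,\omega^{\rho_{\alpha}}\cdot k_{\alpha}]$ by the inductive hypothesis, and then recursively well-order $X$ by appending $U_{\alpha}\setminus U_{\alpha+1}$ on top at successor stages and the single lost point at limit stages. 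The arithmetic collapses to give $X$ the order type $[0,\omega^{\mu}\cdot\omega]=[0,\omega^{\mu+1}]$, and one checks this order induces the original topology.

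The limit case $\lambda$ runs on the same principle and is where I expect the real difficulty to lie. Once more $X\setminus\{\hat x\}=\bigcup_{\alpha<\tau}(X\setminus U_{\alpha})$ with each $X\setminus U_{\alpha}$ of characteristic $(\rho_{\alpha},k_{\alpha})$, $\rho_{\alpha}<\lambda$, hence $\cong[0,\omega^{\rho_{\alpha}}\cdot k_{\alpha}]$ by induction; here one must verify that the $\rho_{\alpha}$ are cofinal in $\lambda$ (using $\hat x\in X^{\mu}$ for every $\mu<\lambda$) and that these clopen blocks glue, along the base $\{U_{\alpha}\}$, into a single increasing chain of order type $\sup_{\alpha}(\omega^{\rho_{\alpha}}\cdot k_{\alpha}+1)=\omega^{\lambda}$, the limit clause of property $(D)$ ensuring that at limit stages nothing disappears except single points that attach cleanly on top. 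Putting $\hat x$ last yields $X\cong[0,\omega^{\lambda}]$, closing the induction. The genuinely delicate points, in both the successor and limit steps, are the bookkeeping ones: arranging the order in which clopen layers are appended so that sums of the form $\omega^{\rho}\cdot k+1+\omega^{\sigma}\cdot m$ always collapse to the intended normal form, and confirming that the order topology of the constructed well-order agrees with the given topology on $X$ (for which it suffices to see that each $U_{\alpha}$, and each layer, is an interval of the order and that the order is compact). An alternative packaging of essentially the same work is to first use property $(D)$ to exhibit a compatible linear order on $X$ and then invoke the classical classification of compact scattered linearly ordered spaces by their characteristic; the core combinatorial difficulty is identical.
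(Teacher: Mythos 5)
The paper itself gives no proof of this theorem (it is quoted from \cite{D}), so I can only judge your argument on its own terms. The reduction to the simple case via zero-dimensionality, the inheritance of property $(D)$ by closed and clopen subspaces, and the base case $\lambda=1$ are fine, and inducting on the rank while exhausting $X\setminus\{\hat x\}$ along a $(D)$-base at $\hat x$ is a sensible strategy. The genuine gap is that the successor and limit steps --- which are the entire content of the theorem --- are not carried out, and the specific devices you invoke to close them do not work as stated. First, the claim that the base $\{U_\alpha\}$ can be ``chosen, using the $x_k$, so that the successive annuli are stripped off in non-decreasing order of leading exponent'' is asserted with no argument; the annuli are dictated by the topology near $\hat x$ and by whatever base exists, and you give no reason such a normal-form-friendly base exists. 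Second, your fallback, ``invoke the classical classification of compact scattered linearly ordered spaces by their characteristic,'' is false: $\omega_1+1+\omega_1^*$ (two copies of $[0,\omega_1]$ glued at their top points) is a compact scattered LOTS of characteristic $(\omega_1,1)$ which is not homeomorphic to $\omega^{\omega_1}+1=\omega_1+1$, since its middle point is an accumulation point of two disjoint closed unbounded sets, which cannot happen at a point of uncountable character in an ordinal. What is true, and what you actually need, is that compact \emph{ordinals} are classified by their characteristic. Finally, the verification that your recursively built order induces the given topology is exactly where property $(D)$'s limit clause does its work, and ``one checks'' is not a proof.

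The fix is to drop the exponent bookkeeping entirely, which is both unjustified and unnecessary: it suffices to produce \emph{any} well-order on $X$ (with $\hat x$ as maximum) whose order topology is contained in the given topology, i.e.\ such that every order-initial and order-final segment is closed in $X$. Then the identity map from the compact space $X$ to $X$ with the (Hausdorff, compact, ordinal) order topology is a continuous bijection, hence a homeomorphism, so $X$ is homeomorphic to a compact ordinal; since homeomorphisms preserve the characteristic, that ordinal must be $\omega^{\lambda}\cdot n+1$. With that reformulation your gluing scheme can be made to work --- at a limit stage of the base one shows that either the earlier blocks stabilize, in which case the lost point of $(\bigcap_{\alpha<\beta}U_\alpha)\setminus U_\beta$ is isolated and sits correctly at a successor position, or they do not, in which case a compactness argument shows every neighborhood of the lost point contains a final segment of what has been placed so far --- but none of this is in your proposal; as written it is an outline whose two named mechanisms are, respectively, unsupported and incorrect.
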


We are now ready to state the main result of this section, in which we describe how we will show any infinite compactum cleavable over an ordinal must be homeomorphic to an ordinal.  

\begin{thm} \label{amain} Let $X$ be a compact space cleavable over an ordinal. If $X$ hereditarily has a spine, then $X$ is homeomorphic to an ordinal. \end{thm}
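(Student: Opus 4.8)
The plan is to prove this by induction on $\CB(X) = \beta + 1$. Since $X$ is cleavable over an ordinal, it is $T_2$ and (by Lemma~\ref{scattered}) scattered, so $X^\beta$ is a finite nonempty set of isolated points of $X^\beta$. I would first reduce to the simple case: write $X^\beta = \{x_1,\dots,x_n\}$ and choose, by compactness and scatteredness, a clopen partition $X = A_1 \cup \cdots \cup A_n$ with $x_i \in A_i$; each $A_i$ is closed in $X$, hence compact, simple (with $\hat x = x_i$), cleavable over an ordinal, and still hereditarily has a spine (any closed infinite simple subset of $A_i$ is one of $X$). A finite topological sum of spaces each homeomorphic to an ordinal is again homeomorphic to an ordinal (stack them: $\gamma_1 \oplus \cdots \oplus \gamma_n \cong (\gamma_1 + \cdots + \gamma_n)$ after replacing each successor-ordinal summand appropriately), so it suffices to treat $X$ simple with $\hat x$.

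So assume $X$ is simple with $\hat x$. Apply the hypothesis to $X$ itself: $X$ has a spine $(S,k)$. The idea is to use $S$ together with the inductive hypothesis on the ``gaps'' of $S$ to build a property-$(D)$ neighborhood base at $\hat x$, and then invoke Theorem~\ref{D}. Concretely: $k|_S$ is an embedding with $k(S)$ club in $k(\hat x)$, and $k$ cleaves along $S$, so $k(\hat x) \notin k(S)$ and $k(\hat x)$ is a limit of $k(S)$; let $\langle s_\alpha : \alpha < \tau\rangle$ enumerate $S$ so that $\langle k(s_\alpha)\rangle$ is increasing and continuous with supremum $k(\hat x)$, where $\tau = \cf(k(\hat x))$. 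For each $\alpha$ set $U_\alpha = k^{-1}\big((k(s_\alpha), k(\hat x)]\big)$, which is clopen in $X$ (an interval in an ordinal that is a final segment below a limit is clopen, and $k$ is continuous) and contains $\hat x$. These form a decreasing neighborhood base at $\hat x$: decreasing is clear, and they are a base because any open $V \ni \hat x$ has $k(V)$ containing a tail of $k(\hat x)$ after intersecting with the closed-hence-compact complement argument — more carefully, $X \setminus V$ is compact so $k(X\setminus V)$ is a compact, hence bounded-below-$k(\hat x)$ (or missing a tail), subset, giving some $U_\alpha \subseteq V$. The clubness (continuity) of $k(S)$ is exactly what forces $\big(\bigcap_{\alpha<\delta} U_\alpha\big) \setminus U_\delta$ to be small at limit $\delta$: $\bigcap_{\alpha<\delta} U_\alpha = k^{-1}\big((\sup_{\alpha<\delta} k(s_\alpha), k(\hat x)]\big) = k^{-1}\big((k(s_\delta), k(\hat x)]\big) = U_\delta$ when $k(s_\delta) = \sup_{\alpha<\delta} k(s_\alpha)$, so the difference is empty — but this only shows $\hat x$ itself has property $(D)$.

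The remaining work, and the main obstacle, is that property $(D)$ must hold at \emph{every} point of $X$, not just $\hat x$ — equivalently, I must show each $U_\alpha \setminus U_{\alpha+1} = k^{-1}\big((k(s_\alpha), k(s_{\alpha+1})]\big)$, together with the ``successor layers'' between consecutive spine points, is itself homeomorphic to an ordinal, and then that $\hat x$'s $(D)$-base is compatible with $(D)$-bases at those points. Each such piece $C_\alpha := k^{-1}\big((k(s_\alpha), k(s_{\alpha+1})]\big)$ is clopen in $X$, hence compact, scattered, and has Cantor–Bendixson rank $\le \beta+1$; if its rank were exactly $\beta+1$ it would be finite-to-$X^\beta$, impossible since $X^\beta = \{\hat x\} \not\subseteq C_\alpha$, so $\CB(C_\alpha) \le \beta$. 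Thus $C_\alpha$ has strictly smaller CB-rank, is cleavable over an ordinal, and hereditarily has a spine (closed subsets of $C_\alpha$ are closed in $X$), so by the inductive hypothesis $C_\alpha$ is homeomorphic to an ordinal — in particular to a compact ordinal $\omega^{\lambda_\alpha}\cdot m_\alpha + 1$ by Theorem~\ref{D} applied internally, or just directly. Now $X = \{\hat x\} \cup \bigcup_{\alpha<\tau} C_\alpha$ with the $C_\alpha$'s clopen and ``converging to'' $\hat x$ in the order given by $k$; this is precisely the structure that yields property $(D)$ for $X$ (take as neighborhood base at $\hat x$ the sets $U_\delta$ for $\delta$ a limit, interleaved with the $(D)$-bases inside each $C_\alpha$ and the finitely-many top points, checking the ``at most one point'' condition at limits using clubness as above). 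Having established property $(D)$ for $X$ with characteristic $(\lambda, n)$ — where $\lambda = \CB^*(\hat x)$ reconstructed as $\sup_\alpha(\lambda_\alpha\! +\! 1)$ or the relevant supremum and $n$ counts the top points — Theorem~\ref{D} gives $X \cong (\omega^\lambda \cdot n) + 1$, an ordinal, completing the induction. The delicate point throughout is bookkeeping the interaction between the transfinite spine enumeration and the CB-structure so that the ``at most one point per limit'' clause of $(D)$ is genuinely verified; I expect this to be where the real care is needed.
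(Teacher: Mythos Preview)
Your argument follows essentially the same route as the paper: induct on the Cantor--Bendixson rank, reduce to the simple case, use the spine $(S,k)$ to slice $X$ into clopen pieces $k^{-1}\big((k(s_\alpha),k(s_{\alpha+1})]\big)$ of strictly smaller rank, apply the inductive hypothesis to each piece, and finish via Theorem~\ref{D}. The paper packages the reassembly by building an auxiliary LOTS $Y$ and an explicit homeomorphism $e\colon X\to Y$ rather than verifying property~$(D)$ directly on $X$, but this is only a cosmetic difference; your one small slip is that $\bigcap_{\alpha<\delta}U_\alpha = k^{-1}\big([k(s_\delta),k(\hat x)]\big)$ (the supremum point lies in the intersection), so the difference at a limit $\delta$ is the singleton $\{s_\delta\}$ rather than empty --- still at most one point, exactly as property~$(D)$ requires.
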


\begin{proof} By Lemma~\ref{scattered}, we know $X$ must be scattered. As $X$ is compact, we may assume without loss of generality that $X$ is simple with $\hat{x}$. We will complete this proof using transfinite induction on $\CB(X)$. We must only consider the successor case, as by Observation~\ref{o2}, $\CB(X)$ must be a successor ordinal.

Let $\CB(X) = \alpha+1$, and assume we have shown that if $X$ hereditarily has a spine, and $\CB(X) < \alpha+1$, then $X$ is homeomorphic to an ordinal. Let $(S,k)$ be a spine for $X$. We may assume without loss of generality that $k(\hat{x})$ is the greatest element of $k(X)$. (Otherwise, we may modify $k$ such that this is true). As a consequence of being a spine for $X$, $S \cup \left\{\hat{x}\right\}$ must be closed in $X$; since $k$ is continuous, we thus know from Lemma~\ref{ord} that $k(S \cup \left\{\hat{x}\right\})$ is homeomorphic to an ordinal. Therefore, if $g$ is such a homeomorphism, and $\lambda+1$ is the ordinal to which it is homeomorphic, then enumerate the elements of $k(S)$ as $y_{\beta}$, where $g(y_{\beta}) = \beta \in \lambda$. Without loss of generality, assume $0 = y_0$. 

For each $\alpha \in I_0(\lambda)$, consider the interval $(y_{\alpha},y_{\alpha+1}]$. In the case when $\alpha = 0$, we consider the interval $[y_0,y_1]$. Note that these intervals may only contain a single element. Each interval is clopen, therefore $k^{-1}((y_{\alpha},y_{\alpha+1}])$ is clopen. Since this set is compact, scattered, by assumption has Cantor-Bendixson rank less than $X$, and hereditarily has a spine, this subset is homeomorphic to an ordinal $\lambda_{\alpha}$ by the inductive hypothesis. Let $h_{\alpha}$ be a homeomorphism from $k^{-1}((y_{\alpha},y_{\alpha+1}])$ to $\lambda_{\alpha}$. Replace $(y_{\alpha},y_{\alpha+1}]$ with a copy of $\lambda_{\alpha}$. Repeat this process for every clopen interval considered, and call the resulting space $Y$. Let the order on $Y$ preserve the order on each $\lambda_{\alpha}$, and the original order on $k(X)$. 

Let $Y$ be a LOTS. We will first show there exists a homeomorphism from $X$ to $Y$. We will then use Theorem~\ref{D} to show $Y$ is homeomorphic to an ordinal. Note that the least element of $Y$ is the least element of $\lambda_0$; call this element $0$. Also note that every element of $X$ is either an element of an aforementioned clopen interval, or $S'$, where $(S,k)$ is the spine of $X$. 

Let $\hat{h}_{\alpha}: k^{-1}((y_{\alpha},y_{\alpha+1}]) \rightarrow \lambda_{\alpha} \subset Y$ be identical to $h_{\alpha}$, and let $\hat{k}: S' \rightarrow Y \setminus \bigcup_{\alpha \in I_0(\lambda)} \lambda_{\alpha}$ be identical to $k$. Let $e$ be defined as: $$e(x) = \begin{cases}
\hat{h}_{\alpha}(x) & x \in k^{-1}((y_{\alpha},y_{\alpha+1}])\\
\hat{k}(x) & x \in S' \end{cases}$$

Injectivity of $e$ follows from the injectivity of each $\hat{h}_{\alpha}$, and $\hat{k}$; $e$ is also obviously well-defined. To show $e$ is continuous, as $Y$ is a LOTS, it is sufficient to show that $e^{-1}([0,a))$ and $e^{-1}((b,e(\hat{x})]))$ are open for every $a,b \in Y$. 

Let $[0,a) \subset Y$. Notice from the way we have created $Y$ that every point of $Y$ is either contained within some $\lambda_{\alpha}$, or is equal to $y_{\delta}, \delta \in {\lambda+1}'$. Therefore, there are two cases for $e^{-1}([0,a))$ to consider: \begin{enumerate}

\item If $a \in \lambda_{\alpha+1}, \alpha+1 \in I_0(\lambda)$, then $e^{-1}([0,a))$ is equal to $$k^{-1}([0,y_{\alpha})) \cup \left\{x \in X: h_{\alpha+1}(x) < a \right\}$$. The left set of the union is open by the continuity of $k$, and the right is open by the continuity of $h_{\alpha+1}$. If $a \in \lambda_0$, then $e^{-1}([0,a))$ is just equal to ${h_0}^{-1}([0,a))$.

\item If $a = y_{\delta}$ for some $\delta \in (\lambda+1)'$. Then $e^{-1}([0,a)) = k^{-1}([0,y_{\delta}))$, which is open by continuity of $k$. \end{enumerate}

We may prove $e^{-1}((b,e(\hat{x})])$ is open in a nearly identical way.

As $e: X \rightarrow Y$ is a continuous surjection, $Y$ must be compact. Thus by Theorem~\ref{D}, $Y$ is homeomorphic to an ordinal. This implies $X$ must be homeomorphic an ordinal as well. \end{proof}

We now know that every infinite, compact, simple, $X$ cleavable over an ordinal such that $X$ hereditarily has a spine must be homeomorphic to an ordinal. We will use this to show that every infinite compactum cleavable over an ordinal is homeomorphic to an ordinal in the following way: we will prove that every infinite compactum cleavable over an ordinal must hereditarily have a spine. 

\section{Finding the semi-spine of $X$}

In this section, we show that every infinite compactum cleavable over an ordinal must have a semi-spine. We do so by first finding a set $A \subseteq X$, indexing $A$ using ordinals, then using this index to find $T \subset A$; this set $T$, along with a function $f$ that we will define, will be our semi-spine. The main results of this section are contained in Theorem~\ref{sp23}. We will then show in the next section that $(T,f)$ is a spine. 

We begin with two definitions, an observation, and a well-known lemma. We then use these to construct the set we will encode.

\begin{definition} The \textbf{cofinality} of a partially ordered set $A$, $\cf(A)$, is defined as the least of the cardinalities of the cofinal subsets of $A$. \end{definition}

\begin{obs} \label{pcf} If a space $X$ is cleavable over an ordinal $\lambda$, then for every $x \in X$, if $f \neq g$ both cleave along $\left\{x\right\}$ over $\lambda$, then $\cf(f(x)) = \cf(g(x))$. \end{obs}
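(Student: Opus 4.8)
The plan is to show that for a non-isolated point $x$ the ordinal $\cf(f(x))$ does not actually depend on the cleaving function $f$: it equals the character $\chi(x,X)$, a purely topological invariant of $x$ in $X$. Applying the same computation to $g$ then gives $\cf(f(x))=\chi(x,X)=\cf(g(x))$. Since $X$ is compact and, being cleavable over the Hausdorff space $\lambda$, is Hausdorff, the character coincides with the pseudocharacter at every point of $X$. If $x$ is isolated there is nothing to prove, so assume $x\in X'$, and put $\alpha=f(x)$. As $f$ cleaves along $\{x\}$ we have $f^{-1}(\alpha)=\{x\}$; since this set is not open, $\{\alpha\}$ is not open in $\lambda$, so $\alpha$ is a limit ordinal.

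The easy half is $\chi(x,X)\le\cf(\alpha)$: choosing $E\subseteq\alpha$ cofinal with $|E|=\cf(\alpha)$, the sets $f^{-1}((\gamma,\alpha])$ for $\gamma\in E$ are open neighbourhoods of $x$ whose intersection is $f^{-1}(\{\alpha\})=\{x\}$ (as $E$ is cofinal in $\alpha$), so $\psi(x,X)\le\cf(\alpha)$ and hence $\chi(x,X)\le\cf(\alpha)$.

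The reverse inequality $\chi(x,X)\ge\cf(\alpha)$ is the crux. Fix a neighbourhood base $\mathscr B$ at $x$ consisting of closed sets with $|\mathscr B|=\chi(x,X)$. Because $X$ is compact and $\lambda$ is Hausdorff, $f$ is a closed map, so each $f(B)$ is closed in $f(X)$, and therefore $f(B)\cap\alpha$ is a closed subset of $\alpha$ (using that $f(X)$ is closed in $\lambda$). I would prove two facts. \emph{(i)} Each $f(B)\cap\alpha$ is cofinal in $\alpha$; otherwise $f(B)\cap(\gamma,\alpha]=\{\alpha\}$ for some $\gamma<\alpha$, and then $\operatorname{int}(B)\cap f^{-1}((\gamma,\alpha])$ would be an open neighbourhood of $x$ sitting inside $f^{-1}(\alpha)=\{x\}$, contradicting that $x$ is not isolated. \emph{(ii)} $\bigcap_{B\in\mathscr B}f(B)=\{\alpha\}$; for if some $z\ne\alpha$ lay in every $f(B)$, the nonempty compact set $f^{-1}(z)$ would meet every member of the base $\mathscr B$, and a finite-intersection argument together with $\bigcap\mathscr B=\{x\}$ would then force $f^{-1}(z)$ to meet $\{x\}$, which is impossible since $f(x)=\alpha\ne z$. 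By \emph{(i)} the sets $f(B)\cap\alpha$, $B\in\mathscr B$, are club subsets of $\alpha$, and by \emph{(ii)} their intersection is empty; since the intersection of fewer than $\cf(\alpha)$ many club subsets of $\alpha$ is again club (in particular nonempty), this forces $|\mathscr B|\ge\cf(\alpha)$.

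Combining the two bounds yields $\chi(x,X)=\cf(f(x))$, and the identical argument applied to $g$ yields $\chi(x,X)=\cf(g(x))$; hence $\cf(f(x))=\cf(g(x))$. I expect the main obstacle to be the reverse inequality in the third paragraph --- manufacturing club subsets of $\alpha$ from an arbitrary neighbourhood base at $x$ and controlling their common intersection --- and this is precisely where compactness of $X$ does the work, both in guaranteeing that $f$ is a closed map and in running the finite-intersection argument that identifies $\bigcap_{B}f(B)$.
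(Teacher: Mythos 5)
Your proof is correct in substance but takes a genuinely different route from the paper's. The paper compares the two functions directly: assuming $\cf(f(x))<\cf(g(x))$, it picks a cofinal $\Delta\subseteq f(x)$ of size $\cf(f(x))$, chooses one point of $f^{-1}(\delta)$ for each $\delta\in\Delta$ to form $D$, argues that $x\in\overline{D}$ (this uses cleaving of $f$ along $\{x\}$ together with compactness of $X$, which makes $f(\overline{D})$ closed and hence forces $f(x)\in f(\overline{D})$), and then pushes forward by $g$: since $g$ cleaves along $\{x\}$, $g(x)\in\overline{g(D)}\setminus g(D)$, so $g(D)$ is cofinal in $g(x)$ and $\cf(g(x))\le|D|=\cf(f(x))$. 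You instead compute the invariant once and for all, showing $\cf(f(x))=\chi(x,X)$ for non-isolated $x$: the upper bound via $\chi=\psi$ in compact Hausdorff spaces, the lower bound via the club argument on images of a closed neighbourhood base. This yields more than the observation asks (it identifies the pseudo-cofinality as the character of $x$ in $X$, independently of any cleaving map), at the cost of heavier machinery (closed maps, the $\chi=\psi$ theorem, club combinatorics), whereas the paper's argument is shorter and symmetric in $f$ and $g$. Both proofs use compactness of $X$ even though the statement says only ``a space $X$''; that is a defect of the statement, not of your argument.

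Two small repairs are needed. First, the fact that fewer than $\cf(\alpha)$ many clubs in $\alpha$ have club intersection is false when $\cf(\alpha)=\omega$: two clubs in an ordinal of countable cofinality can be disjoint (e.g.\ the evens and the odds in $\omega$). In that case conclude directly that a non-isolated point of a $T_1$ space has no finite neighbourhood base, so $\chi(x,X)\ge\omega=\cf(\alpha)$ anyway; your club argument is only needed, and only valid, when $\cf(\alpha)>\omega$. Second, for isolated $x$ it is not that ``there is nothing to prove'': the statement as written can actually fail there, since an isolated point may be sent by different cleaving maps to points of different cofinalities. So restricting to $x\in X'$ is a restriction of the statement rather than a disposable case; this flaw is shared by the paper, whose proof also silently assumes $x$ is non-isolated, and the observation is only ever applied to the non-isolated point $\hat{x}$.
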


\begin{proof} Assume for a contradiction that $\cf(f(x)) < \cf(g(x))$. Let $\Delta \subset [0,f(X))$ be a cofinal subset of cardinality $\cf(f(x))$. For every $\delta \in \Delta$, let $x_{\delta} \in f^{-1}(\delta)$. Let $D = \left\{x_{\delta}: \delta \in \Delta \right\}$. Then $|D| = \cf(f(x))$, and as $f$ cleaves along $\left\{x\right\}$, $x \in \overline{D}$. Since $g$ cleaves along $\left\{x\right\}$, $g(D)$ must be cofinal in $g(x)$, implying $\cf(g(x)) \leq \cf(f(x))$, a contradiction. \end{proof}

\begin{definition} Let a space $X$ cleave along an ordinal $\lambda$, let $x \in X$, and let $f$ cleave along $\left\{x\right\}$ over $\lambda$. We say the \textbf{pseudo-cofinality} of $x$, $\pcf(x)$, is the least of the cardinalities of the cofinal subsets of $f(x)$. That is, $\pcf(x) = \cf(f(x))$. \end{definition}

Observation~\ref{pcf} implies this definition is well defined.

\begin{lemma} \label{singular} Every singular ordinal $\beta$ such that $\cf(\beta) = \gamma$ contains a cofinal club homeomorphic to $\gamma$. \end{lemma}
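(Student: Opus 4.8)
The plan is to construct the cofinal club explicitly using the cofinality map. Let $\beta$ be a singular ordinal with $\cf(\beta) = \gamma$; since $\gamma$ is a cardinal and $\gamma < \beta$, we may fix a strictly increasing continuous function $h : \gamma \to \beta$ whose range is cofinal in $\beta$. Such an $h$ is obtained in the usual way: start from any (not necessarily continuous) cofinal map $\gamma \to \beta$, and then close it off under suprema of increasing sequences of length less than $\gamma$. Concretely, I would define $h$ by transfinite recursion on $\xi < \gamma$: at successor stages take $h(\xi+1)$ to be a point of the original cofinal set above $h(\xi)$, and at limit stages $\eta < \gamma$ set $h(\eta) = \sup_{\xi < \eta} h(\xi)$; this supremum is $< \beta$ because $\eta < \gamma = \cf(\beta)$, so the image of an initial segment of length $\eta$ cannot be cofinal in $\beta$.

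The first key step is to verify that $C := h[\gamma]$ is a club in $\beta$. Cofinality of $C$ in $\beta$ is immediate since $h$ dominates the original cofinal map. Closedness follows from continuity of $h$: if $\delta < \beta$ is a limit of points of $C$, write those points as $h(\xi)$ for $\xi$ ranging over some set $Z \subseteq \gamma$; then $\delta = \sup_{\xi \in Z} h(\xi) = h(\sup Z)$ by continuity (noting $\sup Z < \gamma$, as otherwise $C$ would be cofinal below $\delta < \beta$, contradicting $\delta < \sup C = \beta$... more carefully, $\sup Z \le \gamma$, and if $\sup Z = \gamma$ then $\sup_{\xi \in Z} h(\xi) = \beta \neq \delta$), so $\delta \in C$.

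The second key step is to see that $C$, with the subspace (equivalently, order) topology inherited from $\beta$, is homeomorphic to $\gamma$. Since $h$ is a strictly increasing continuous surjection onto $C$, it is an order isomorphism from $\gamma$ onto $C$; order isomorphisms between LOTS are automatically homeomorphisms, so $C \cong \gamma$ topologically. I should also note that $C$ is closed in $\beta$ as a \emph{subspace}, which for ordinals with the order topology is exactly the statement that $C$ contains all its limit points below $\beta$ — already established — so the subspace topology on $C$ agrees with its order topology and the identification with $\gamma$ goes through cleanly.

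I do not expect a genuine obstacle here; this is a standard fact about cofinalities. The only point requiring a little care is the limit-stage argument in the recursion — ensuring $h(\eta) < \beta$ at every limit $\eta < \gamma$ — which is precisely where the hypothesis $\gamma = \cf(\beta)$ is used, and which simultaneously guarantees continuity and hence closedness of the resulting club. A minor subtlety worth stating is that the lemma is only interesting when $\gamma < \beta$ (i.e. $\beta$ is genuinely singular as the word suggests, or at least not equal to its cofinality), since otherwise $\beta$ itself works; the construction above handles the general case uniformly.
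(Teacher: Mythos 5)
Your proof is correct, and it is exactly the standard argument the paper has in mind: the paper states this lemma as well known and gives no proof, so your construction of a strictly increasing continuous cofinal map $h:\gamma\to\beta$, with its range closed by continuity and homeomorphic to $\gamma$ because the order isomorphism of a closed subspace is a homeomorphism (cf.\ the paper's Lemma~1.8), supplies precisely the omitted routine details.
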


We now construct $A \subseteq X$, which we will use to find the spine of $X$.

\begin{quote} \textbf{Construction of $A \subseteq X$}:

Let $X$ be infinite, compact, simple with $\hat{x}$ such that $\pcf(\hat{x}) > \omega$, and such that $X$ is cleavable over an ordinal. Let $f$ cleave along $I_0(X) \cup \left\{\hat{x}\right\}$. Since $f$ is continuous and $X$ is compact, $|f^{-1}(f(y))|$ must be finite for all $y \in I_0(X) \cup \left\{\hat{x} \right\}$. Without loss of generality then, assume $f^{-1}(f(y)) = y$ for every $y \in I_0(X) \cup \left\{\hat{x}\right\}$. 

As $f(X)$ is a closed subset of an ordinal, by Lemma~\ref{ord} we know it is homeomorphic to an ordinal. Therefore, without loss of generality, let $f(X) = \mu+1 \subseteq \lambda$. If $\mu$ is singular, let $M \subset \mu$ be a cofinal club homeomorphic to $\cf(\mu)$. This exists by Lemma~\ref{singular}. Note $\mu > \omega$, as $\pcf(\hat{x}) > \omega$; if $\mu$ is regular, let $M = \mu$. 

Let $A$ be chosen in the following way: for each $\alpha \in M$, let $x_{\alpha}$ be such that $f(x_{\alpha}) = \alpha$. Let $A = \left\{x_{\alpha}: \alpha \in M \right\}$. \end{quote} 

We will use the subscripts of the elements of $A$, and the continuity of various functions from $X$ to $\lambda$, to show there exists a set $T \subseteq A$ such that $T \cup \left\{\hat{x}\right\}$ is closed in $X$. By construction, $f|_A$ is injective, therefore $f|_T$ will be injective; this will be all we need to give us that $(T,f)$ is a semi-spine of $X$. To see how we will use the subscripts of $A$, however, we must state a well-known lemma, also known as Fodor's Lemma.

\begin{lemma} [Pressing Down Lemma] Let $\kappa > \omega$ be regular, $S$ a stationary subset of $\kappa$, and $f: S \rightarrow \kappa$ such that $\forall \gamma \in S$, $f(\gamma) < \gamma$; then for some $\alpha < \kappa$, $f^{-1}(\alpha)$ is stationary. \end{lemma}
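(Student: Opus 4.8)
The plan is to prove the contrapositive by the classical diagonal-intersection argument. Suppose toward a contradiction that $f^{-1}(\alpha)$ is non-stationary for every $\alpha < \kappa$, so that for each $\alpha < \kappa$ we may fix a club $C_{\alpha} \subseteq \kappa$ with $C_{\alpha} \cap f^{-1}(\alpha) = \emptyset$. Set $D = \{\xi < \kappa : \xi \in C_{\alpha} \text{ for all } \alpha < \xi\}$, the diagonal intersection of the $C_{\alpha}$.

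The first step is to verify that $D$ is club in $\kappa$. Closedness is routine: if $\xi < \kappa$ is a limit of elements of $D$, then for each fixed $\alpha < \xi$ all sufficiently large elements of $D$ below $\xi$ belong to $C_{\alpha}$, and since $C_{\alpha}$ is closed we get $\xi \in C_{\alpha}$; as this holds for every $\alpha < \xi$, we conclude $\xi \in D$. Unboundedness is where regularity of $\kappa$ enters: given $\beta < \kappa$, recursively pick $\beta = \beta_0 < \beta_1 < \cdots$ with $\beta_{n+1} > \beta_n$ and $\beta_{n+1} \in \bigcap_{\alpha \le \beta_n} C_{\alpha}$, which is possible because an intersection of fewer than $\kappa$ many clubs in a regular uncountable $\kappa$ is again a club, hence unbounded; then $\xi := \sup_n \beta_n$ satisfies $\xi < \kappa$, $\xi \in D$, and $\xi > \beta$.

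Next, since $D$ is club and $S$ is stationary, $S \cap D$ is stationary and in particular nonempty, so pick $\gamma \in S \cap D$ (note $\gamma > 0$, as $f(\gamma) < \gamma$ forces this). By hypothesis $\alpha := f(\gamma) < \gamma$, and since $\gamma \in D$ with $\alpha < \gamma$ we obtain $\gamma \in C_{\alpha}$. But $f(\gamma) = \alpha$ means $\gamma \in f^{-1}(\alpha)$, contradicting $C_{\alpha} \cap f^{-1}(\alpha) = \emptyset$. Hence $f^{-1}(\alpha)$ must be stationary for some $\alpha < \kappa$.

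The only genuine obstacle is the auxiliary fact that the diagonal intersection of $\kappa$ many clubs (and, within its proof, that an intersection of fewer than $\kappa$ many clubs) is club in a regular uncountable $\kappa$; the rest is a two-line contradiction. Since this is entirely standard material, in the actual write-up I would simply cite it rather than reproduce the diagonal-intersection machinery.
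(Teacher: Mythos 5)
Your proof is correct: it is the standard diagonal-intersection argument for Fodor's Lemma, and each step (closedness and unboundedness of the diagonal intersection via regularity, then intersecting with the stationary set $S$ to get the contradiction) is carried out accurately. The paper itself offers no proof of this statement---it is quoted as a well-known result---so there is nothing to compare against; citing the club/diagonal-intersection facts, as you suggest, would be entirely appropriate.
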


\begin{lemma} \label{pdl} Let $X$, $A$, $f$, and $M$ be as described in the \textbf{Construction of $A$}. If $j$ cleaves along $A$, then there exists a club subset of $M$ such that for every element $\beta$ in this club, and for every $\eta \in M$, $j(x_{\eta}) \geq j(x_{\beta})$ implies $\eta > \beta$. \end{lemma}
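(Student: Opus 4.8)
The plan is to distill from the cleavability of $X$ one clean consequence—that $\hat x$ lies in the closure of every subset of $A$ with unbounded index set—and then feed this into the Pressing Down Lemma. Identify $M$ with the regular uncountable cardinal $\kappa=\cf(\mu)=\pcf(\hat x)$ via its order isomorphism, write $x_\beta$ for the element of $A$ with $f(x_\beta)=\beta$, and set $v^\ast=j(\hat x)$. Note $\hat x\notin A$, since $f(x_\beta)=\beta<\mu=f(\hat x)$.

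First I would establish the cleavability input: for every unbounded $E\subseteq M$, $\hat x\in\overline{\{x_\beta:\beta\in E\}}$. The set $\{f(x_\beta):\beta\in E\}$ is cofinal in $\mu=f(\hat x)$, so were there an open $U\ni\hat x$ disjoint from $\{x_\beta:\beta\in E\}$, the compactum $f(X\setminus U)\subseteq\mu+1$ would meet $\mu$ cofinally, hence contain $\mu$, producing $p\in X\setminus U$ with $f(p)=\mu$; but $f^{-1}(\mu)=\{\hat x\}$, so $p=\hat x\in X\setminus U$—absurd. Since $j$ is continuous and cleaves along $A$ (so $v^\ast=j(\hat x)\notin j(A)$), this forces: for every unbounded $E\subseteq M$, the point $v^\ast$ is a limit \emph{from below} of $\{j(x_\beta):\beta\in E\}$. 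Two deductions follow. (i) $\{\beta\in M:j(x_\beta)\ge v^\ast\}$ is bounded (else it is an unbounded $E$ whose $j$-image lies entirely at or above $v^\ast$, contradicting ``from below''); so, replacing $M$ by a final segment—still a club of $M$, still order-isomorphic to $\kappa$—I may assume $j(x_\beta)<v^\ast$ for every $\beta\in M$. (ii) With this normalization, re-running the argument gives $\sup\{j(x_\beta):\beta\in E\}=v^\ast$ for every unbounded $E\subseteq M$; equivalently, $\{\beta\in M:j(x_\beta)\le w\}$ is bounded in $M$ for each $w<v^\ast$.

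Now the Pressing Down step. Suppose no club subset of $M$ has the asserted property. Then $S:=\{\beta\in M:\exists\,\eta<\beta\text{ in }M\text{ with }j(x_\eta)\ge j(x_\beta)\}$ is stationary in $M$, since its complement would otherwise contain—indeed be—such a club. For $\beta\in S$ let $r(\beta)$ be the least $\eta<\beta$ with $j(x_\eta)\ge j(x_\beta)$; this $r$ is regressive on the stationary set $S$, so by the Pressing Down Lemma it is constant, say with value $\eta_0$, on a stationary, hence unbounded, $S'\subseteq S$. Then $j(x_\beta)\le j(x_{\eta_0})<v^\ast$ for all $\beta\in S'$, so $S'\subseteq\{\beta\in M:j(x_\beta)\le j(x_{\eta_0})\}$, which is bounded by (ii)—contradicting the unboundedness of $S'$. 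Hence a club $C$ as required exists; as a club of a final segment of the original $M$ it is a club of the original $M$.

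The hard part is the first displayed claim together with its book-keeping: the cleavability hypothesis constrains only points accumulating at $\hat x$, so one has to argue that $\hat x$ ``sees'' every cofinally indexed piece of $A$, and then strip off the bounded set of indices $\beta$ with $j(x_\beta)\ge j(\hat x)$ before $j$ is forced to climb along $M$—and it is precisely this forced climbing that the Pressing Down Lemma converts into the club. (If one normalizes $j$ so that $j(\hat x)$ is the greatest element of $j(X)$, as is done for the cleaving function in the proof of Theorem~\ref{amain}, step (i) is vacuous and the conclusion holds verbatim for $M$ itself.)
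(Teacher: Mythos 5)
Your proof is correct and takes essentially the same route as the paper's: a regressive index function on $M\cong\cf(\mu)$ plus the Pressing Down Lemma, with the contradiction coming from the fact that any unboundedly-indexed subset of $A$ accumulates at $\hat{x}$, so its $j$-image cannot be bounded below $j(\hat{x})$. You spell out the contradiction (your steps (i) and (ii)) that the paper only gestures at, and you bypass the normalization of $j(\hat{x})$ by passing to a final segment of $M$; the only caveat is that, exactly as in the paper's own argument, what is actually obtained is $\eta\geq\beta$ rather than the literal $\eta>\beta$ of the statement (which fails when $\eta=\beta$).
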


\begin{proof} If $j = f$, the function used in our construction of $A$, then we immediately know that $(A,f)$ is a spine of $X$. Therefore, assume $j \neq f$. Also assume without loss of generality that $j(\hat{x})$ is the greatest element of $j(X)$. We define a regressive function based on the subscripts of the elements of $A$ to show a club subset exists as described above. Remember that we have defined $M$ to be $\mu = f(X) \setminus \left\{f(\hat{x})\right\}$ if $\mu$ is regular, or a cofinal subset of $\mu$ homeomorphic to $\cf(\mu)$ if $\mu$ is singular. As we will soon see, it does not affect the proof whether we assume $\mu$ is singular or regular. Therefore, for ease of notation, we will assume $\mu$ is regular.

We construct a function from $\mu$ to $\mu$ which is regressive on a (possibly finite) subset. Let $g: \mu \rightarrow \mu$ be defined to be $$g(\alpha) = \min{\left\{\eta \in \mu: x_{\eta} \in j^{-1}([j(x_{\alpha}),j(\hat{x})] \right\}}$$.

That is, $g$ is the identity for those elements $\delta$ such that if $j(x_{\delta}) \leq j(x_{\gamma})$, then $\delta \leq \gamma$. Let $\hat{B} = \left\{\delta \in \mu: g(\delta) = \delta \right\}$. The function $g$ is clearly regressive on those ordinals in $\mu \setminus \hat{B}$. If $\mu \setminus \hat{B}$ were stationary, this would make $g$ regressive on a stationary subset of a regular ordinal. Therefore by the Pressing Down Lemma, an unbounded number of elements would be mapped to points less than some $x_{\gamma}$; this contradicts either the continuity of $j$, or the fact that $j$ cleaves along $A$, or that $j(\hat{x})$ is the greatest element of $j(X)$. Therefore $\mu \setminus B$ cannot be stationary, and $\hat{B}$ must contain a club. \end{proof}

We now use this lemma to construct $T \subseteq A$ such that $T \cup \left\{\hat{x}\right\}$ is closed in $X$. 

\begin{quote} \textbf{Construction of $T \subseteq A$}: 

Let $X$, $A$, $f$, $M$, and $\mu+1$ be as described in \textbf{Construction of $A$}, and let $j$ cleave along $A$. Let $C$ be the club subset of $\mu$ described in Lemma~\ref{pdl}. Note that by construction, $C$ is homeomorphic to $\cf(\mu)$. Furthermore, remember that the elements of $A$ are written as $x_{\alpha}$, where $f(x_{\alpha}) = \alpha \in M \subseteq \mu \subseteq \lambda$. 

Let $T = \left\{x_{\beta_{\gamma}}: \gamma \in M\right\}$, where $\beta_{\gamma}$ is defined as:

\textbf{Base Step}: Let $\beta_0$ be the least element of $C$ such that $\beta_0 > \eta$ for every $x_{\eta} \in j^{-1}([0,j(x_0)])$.

\textbf{Successor Step}: Let $\beta_{\alpha+1}$ be the least element of $C$ such that $\beta_{\alpha+1} > \eta$ for every $x_{\eta} \in j^{-1}([0,j(x_{\beta_{\alpha}})])$.

\textbf{Limit Step}: Consider step $\delta$, where $\delta$ is a limit ordinal. Since the ordinals we have chosen are increasing at every step, and $C$ is club in $\mu$, by construction there is only one element in $\overline{\left\{\beta_{\alpha}: \alpha < \delta \right\}} \setminus \left\{\beta_{\alpha}: \alpha < \delta \right\}$; let $\beta_{\delta}$ be this element. \end{quote}

We now explore the properties of $T$. 

\begin{lemma} \label{closed1} The set $\left\{\beta_{\alpha}: x_{\beta_{\alpha}} \in \hat{T} \right\}$ is club in $\mu$. \end{lemma}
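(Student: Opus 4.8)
The plan is first to note that the set named in the statement is just $\{\beta_{\gamma}\colon\gamma\in M\}$ (equivalently, $f(T)$): the points $x_{\beta_{\gamma}}$, $\gamma\in M$, are exactly the points of $T$, $\hat x\notin A$ is none of them, and by continuity of $f$ any further point of $\overline{T}$ would have $f$-value in $\overline{f(T)}=\{\beta_{\gamma}\colon\gamma\in M\}\cup\{\mu\}$, so (as $\mu\notin\mu$) no new index below $\mu$ is contributed. Thus this set lies in $C\subseteq\mu$, and it suffices to show it is closed and unbounded in $\mu$.

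\emph{Closedness.} Since $\beta_{\alpha+1}$ is chosen in $C$ strictly above $\beta_{\alpha}$ (note $\beta_{\alpha}$ lies in the set $E_{\alpha}$ introduced below), and at limit stages $\beta_{\delta}=\sup_{\alpha<\delta}\beta_{\alpha}$ by the \textbf{Limit Step}, the assignment $\gamma\mapsto\beta_{\gamma}$ is strictly increasing. Hence any ordinal below $\mu$ which is a limit of $\beta_{\gamma}$'s equals $\sup_{\alpha<\delta}\beta_{\alpha}=\beta_{\delta}$ for some limit $\delta$, and so belongs to the set. Therefore the set is closed in $\mu$.

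\emph{Unboundedness.} The key point is that the recursion defining the $\beta_{\gamma}$ never stalls: for each already-defined $\beta_{\alpha}$ the set $E_{\alpha}:=\{\eta\in M\colon x_{\eta}\in j^{-1}([0,j(x_{\beta_{\alpha}})])\}$ (and the analogous set at the \textbf{Base Step}) is bounded in $\mu$, so a least element of $C$ lying above it exists, $C$ being unbounded in $\mu$. To see this, suppose $E_{\alpha}$ were cofinal in $\mu$; put $K:=j^{-1}([0,j(x_{\beta_{\alpha}})])$, which is closed in $X$ hence compact, and $S:=\{x_{\eta}\colon\eta\in E_{\alpha}\}\subseteq K$. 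Then $f(\overline S)$ is compact, hence closed in $\mu+1$, and contains $f(S)=E_{\alpha}$, hence contains $\sup E_{\alpha}=\mu$; as $f^{-1}(\mu)=\{\hat x\}$ this forces $\hat x\in\overline S\subseteq\overline K=K$, i.e. $j(\hat x)\le j(x_{\beta_{\alpha}})$. But $j(\hat x)$ is by our normalization the greatest element of $j(X)$, while $j$ cleaves along $A$ with $x_{\beta_{\alpha}}\in A$ and $\hat x\notin A$, so $j(\hat x)\ne j(x_{\beta_{\alpha}})$, a contradiction. Hence every $\beta_{\gamma}$ is defined, with $\beta_{\delta}=\sup_{\alpha<\delta}\beta_{\alpha}<\mu$ at limits (a supremum of fewer than $\cf(\mu)$ ordinals below $\mu$) lying in $C$ since $C$ is club. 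Finally, enumerating $C$ increasingly as $\{c_{\xi}\colon\xi<\cf(\mu)\}$, a straightforward transfinite induction gives $\beta_{\xi}\ge c_{\xi}$ for all $\xi$: at a successor $\beta_{\xi+1}\in C$ with $\beta_{\xi+1}>\beta_{\xi}\ge c_{\xi}$, so $\beta_{\xi+1}\ge c_{\xi+1}$; at a limit $\beta_{\delta}=\sup_{\xi<\delta}\beta_{\xi}\ge\sup_{\xi<\delta}c_{\xi}=c_{\delta}$. Therefore $\sup_{\gamma\in M}\beta_{\gamma}\ge\sup_{\xi<\cf(\mu)}c_{\xi}=\mu$, so the set is cofinal in $\mu$.

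Combining the two halves, $\{\beta_{\alpha}\colon x_{\beta_{\alpha}}\in\hat T\}$ is club in $\mu$. The step I expect to be the real obstacle is the boundedness of the sets $E_{\alpha}$: this is the one place where compactness of $X$, continuity of $j$, and the fact that $j$ cleaves along $A$ must all be combined, and it is exactly what prevents the construction of $T$ from terminating before it runs through all of $M$.
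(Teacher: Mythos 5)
Your proof is correct, and it fills in exactly the verification that the paper compresses into the single line ``this is obvious by construction'': closedness from the strictly increasing, sup-taking recursion, and unboundedness from the fact that the recursion never stalls. The one genuinely non-trivial ingredient you supply --- that each $E_{\alpha}=\{\eta\in M: x_{\eta}\in j^{-1}([0,j(x_{\beta_{\alpha}})])\}$ is bounded in $\mu$, via compactness of $X$, closedness of $f(\overline{S})$, the normalization $f^{-1}(\mu)=\{\hat{x}\}$, and the fact that $j$ cleaves $\hat{x}$ apart from $A$ --- is left entirely implicit in the paper, so your argument is the intended one, just made explicit.
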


\begin{proof} This is obvious by construction. \end{proof}

\begin{lemma} \label{less} For every $\gamma \in M$, $\delta \in M'$, and $\beta_{\delta} \in C$, $\gamma > \beta_{\delta}$ if and only if $j(x_{\gamma}) \geq j(x_{\beta_{\delta}})$. \end{lemma}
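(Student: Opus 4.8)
The plan is to exploit the way $T$ was built on top of the club $C$ from Lemma~\ref{pdl}, together with the defining property of that club. Recall that $C$ is the club subset of $\mu$ produced in Lemma~\ref{pdl}, and that every $\beta_\delta$ appearing as a subscript in $T$ is by construction an element of $C$. The key fact we get for free is therefore: for every $\eta\in M$, if $j(x_\eta)\ge j(x_{\beta_\delta})$ then $\eta>\beta_\delta$. This gives one direction of the biconditional immediately, namely that $j(x_\gamma)\ge j(x_{\beta_\delta})$ implies $\gamma>\beta_\delta$ (applying Lemma~\ref{pdl} with $\eta=\gamma$).

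For the converse direction I would argue as follows. Suppose $\gamma>\beta_\delta$ with $\gamma\in M$ and $\delta\in M'$. We want $j(x_\gamma)\ge j(x_{\beta_\delta})$. Since $\delta$ is a limit point of $M$, the subscript $\beta_\delta$ was chosen at the limit step as the unique element of $\overline{\{\beta_\alpha:\alpha<\delta\}}\setminus\{\beta_\alpha:\alpha<\delta\}$, so $\beta_\delta=\sup_{\alpha<\delta}\beta_\alpha$ and the sequence $\langle\beta_\alpha:\alpha<\delta\rangle$ is strictly increasing and cofinal in $\beta_\delta$. At each successor step, $\beta_{\alpha+1}$ was chosen so that $\beta_{\alpha+1}>\eta$ for every $x_\eta\in j^{-1}([0,j(x_{\beta_\alpha})])$; equivalently, if $\eta\le\beta_\alpha$ is irrelevant, what matters is that $j(x_\eta)>j(x_{\beta_\alpha})$ for every $\eta$ with $\beta_\alpha<\eta$, in particular for $\eta$ in the range $(\beta_\alpha,\beta_{\alpha+1}]$. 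Hence the values $j(x_{\beta_\alpha})$ are strictly increasing in $\alpha$, and for any $\eta$ with $\beta_\alpha<\eta$ we have $j(x_\eta)>j(x_{\beta_\alpha})$. Now given $\gamma>\beta_\delta=\sup_{\alpha<\delta}\beta_\alpha$, pick any $\alpha<\delta$; then $\gamma>\beta_\alpha$, so $j(x_\gamma)>j(x_{\beta_\alpha})$. Since this holds for all $\alpha<\delta$, we get $j(x_\gamma)\ge\sup_{\alpha<\delta}j(x_{\beta_\alpha})$. It remains to identify this supremum with $j(x_{\beta_\delta})$: because $j$ is continuous and cleaves along $A$, and $\beta_\delta=\sup\beta_\alpha$ forces $x_{\beta_\delta}$ to be the limit point of $\{x_{\beta_\alpha}:\alpha<\delta\}$ in $X$ (this is exactly what the limit step of the construction records), continuity of $j$ gives $j(x_{\beta_\delta})=\lim_{\alpha<\delta}j(x_{\beta_\alpha})=\sup_{\alpha<\delta}j(x_{\beta_\alpha})$, where the last equality uses that the $j(x_{\beta_\alpha})$ are increasing. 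Combining, $j(x_\gamma)\ge j(x_{\beta_\delta})$, as desired.

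I expect the main obstacle to be the bookkeeping in the converse direction — specifically, making precise that $x_{\beta_\delta}$ really is the topological limit in $X$ of the $x_{\beta_\alpha}$ (so that continuity of $j$ applies), and that the supremum of the strictly increasing ordinals $j(x_{\beta_\alpha})$ is genuinely attained/approached at $j(x_{\beta_\delta})$ rather than overshooting. Both points ultimately rest on: (i) the limit step of the construction, which by its wording selects $\beta_\delta$ as the unique new point in the closure of the earlier subscripts, so $x_{\beta_\delta}\in\overline{\{x_{\beta_\alpha}:\alpha<\delta\}}$ since $f$ is a homeomorphism-like identification on these points via $M$ being a club copy of $\cf(\mu)$; and (ii) the fact that $j$ cleaves along $A\supseteq T$ together with the assumption (from Lemma~\ref{pdl}) that $j(\hat x)$ is the top of $j(X)$, which rules out the values $j(x_{\beta_\alpha})$ escaping past $j(x_{\beta_\delta})$. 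Once these are nailed down, the biconditional follows by combining the two directions.
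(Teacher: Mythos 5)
Your ``only if'' direction ($j(x_\gamma)\ge j(x_{\beta_\delta})\Rightarrow\gamma>\beta_\delta$) is fine: it is exactly the defining property of the club $C$ from Lemma~\ref{pdl}, applied with $\eta=\gamma$ and $\beta=\beta_\delta\in C$, and this is in fact the only direction the paper later uses (in Lemma~\ref{closed2}). In the converse direction, the first half of your argument survives a small repair: the successor step only guarantees $j(x_\eta)>j(x_{\beta_\alpha})$ for $\eta\ge\beta_{\alpha+1}$, not for every $\eta>\beta_\alpha$ as you assert (nothing is known about $\eta\in(\beta_\alpha,\beta_{\alpha+1})$); but since $\gamma>\beta_\delta\ge\beta_{\alpha+1}$ for every $\alpha<\delta$, you do get $j(x_\gamma)\ge\sup_{\alpha<\delta}j(x_{\beta_\alpha})$.

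The genuine gap is the identification $\sup_{\alpha<\delta}j(x_{\beta_\alpha})=j(x_{\beta_\delta})$. You justify it by saying the limit step of the construction ``records'' that $x_{\beta_\delta}$ is the topological limit in $X$ of $\{x_{\beta_\alpha}:\alpha<\delta\}$. It does not: the limit step is a statement about the subscripts, i.e.\ about ordinals in $C\subseteq\mu$, and says only that $\beta_\delta=\sup_{\alpha<\delta}\beta_\alpha$. The point $x_{\beta_\delta}$ was chosen in the Construction of $A$ as an arbitrary element of the fibre $f^{-1}(\beta_\delta)$, and only fibres over $I_0(X)\cup\{\hat x\}$ were normalized to singletons, so $f^{-1}(\beta_\delta)$ may contain other points. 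By compactness $\{x_{\beta_\alpha}:\alpha<\delta\}$ does have a limit point $z$, and continuity of $f$ forces $f(z)=\beta_\delta$, but nothing available at this stage forces $z=x_{\beta_\delta}$; if $z\ne x_{\beta_\delta}$, continuity of $j$ gives $\sup_{\alpha<\delta}j(x_{\beta_\alpha})=j(z)$, which could lie strictly below $j(x_{\beta_\delta})$, and your chain of inequalities breaks. The assertion that the chosen points converge to the chosen point $x_{\beta_\delta}$ is essentially the statement that $T\cup\{\hat x\}$ is closed, which is Lemma~\ref{closed2} --- proved later, and proved using the present lemma --- so appealing to it here is circular; and injectivity of $f$ on $A$ (your ``homeomorphism-like identification'') does not help, since it says nothing about where limit points of $A$ land. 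For comparison, the paper's own proof is a one-line appeal to the construction of $T$ and the property of $C$ and does not supply this missing step either; but as written your converse direction rests on a claim that is neither proved nor available at this point in the development.
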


\begin{proof} This follows from the description of $C$ in Lemma~\ref{pdl}, and the construction of $T$. \end{proof}

We now rely on Lemma~\ref{less} to show that $T \cup \left\{\hat{x}\right\}$ is closed. It will then be an obvious consequence that $(T,f)$ is a semi-spine of $X$. 

\begin{lemma} \label{closed2} If $X$, $f$, $\hat{x}$, and $T$ are all as described in \textbf{Construction of $T$}, then $T \cup \left\{\hat{x}\right\}$ is closed in $X$. \end{lemma}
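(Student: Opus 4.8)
The plan is to show that $T \cup \{\hat x\}$ contains all its limit points, equivalently that $X \setminus (T \cup \{\hat x\})$ is open. Since $X$ is compact and scattered, it suffices to take a point $z \in \overline{T}$ and show $z \in T \cup \{\hat x\}$; so suppose $z \notin T$ and aim to prove $z = \hat x$. Because $z$ is a limit of points of $T$, there is a net (indeed, by first-countability considerations available from scatteredness and compactness, or simply by working with the relevant subnet) of points $x_{\beta_{\gamma_i}} \in T$ converging to $z$. First I would apply the continuity of $j$: then $j(x_{\beta_{\gamma_i}}) \to j(z)$ in $Y$. Also apply continuity of $f$: $f(x_{\beta_{\gamma_i}}) = \beta_{\gamma_i} \to f(z)$.

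Next I would argue that the indices $\beta_{\gamma_i}$ must be cofinal in $\mu$ — i.e. $f(z) = \mu$. Suppose not; then the $\beta_{\gamma_i}$ accumulate at some $\beta_{\delta} < \mu$ with $\delta \in M'$ (using Lemma~\ref{closed1}, the set of indices appearing in $\hat T$ is club in $\mu$, so any bounded accumulation point of them is again of the form $\beta_{\delta}$ and lies in $C$). Then on one hand $f(z) = \beta_{\delta}$, so $z = x_{\beta_{\delta}} \in T$ by the normalization $f^{-1}(f(y)) = y$ for $y \in I_0(X) \cup \{\hat x\}$ — wait, that normalization was for $f$, and here I must be careful: the relevant point is that $f|_A$ is injective and $f(x_{\beta_\delta}) = \beta_\delta$, so if $f(z) = \beta_\delta$ and $z$ is a limit of the $x_{\beta_{\gamma_i}}$, I can use Lemma~\ref{less} to pin down $z$. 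Concretely, for indices $\gamma_i$ eventually $> \beta_\delta$, Lemma~\ref{less} gives $j(x_{\beta_{\gamma_i}}) \ge j(x_{\beta_\delta})$, and the successor/base steps in the construction of $T$ force $j(x_{\beta_{\gamma_i}})$ to strictly exceed $j$ on an initial segment that grows past $\beta_\delta$; passing to the limit, $j(z) \ge j(x_{\beta_\delta})$, while continuity of $j$ and the strict jump built into the construction contradicts $z$ accumulating at a point with index $< \mu$ unless $z$ is actually one of the $x_{\beta_\alpha}$ — contradicting $z \notin T$. So $f(z) = \mu$.

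Then $f(z) = \mu = f(\hat x)$, and since $f$ cleaves along $I_0(X) \cup \{\hat x\}$ with $f^{-1}(f(\hat x)) = \{\hat x\}$, we conclude $z = \hat x$, as desired. Thus every point of $\overline{T}$ not in $T$ equals $\hat x$, so $T \cup \{\hat x\}$ is closed.

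The main obstacle I anticipate is the middle step: ruling out that a limit point of $T$ has $f$-image strictly below $\mu$. This requires carefully unwinding the recursive construction of $T$ — in particular using that each $\beta_{\alpha+1}$ was chosen so that $j$ maps $x_{\beta_{\alpha+1}}$ strictly above the $j$-values of an initial segment containing $x_{\beta_\alpha}$, together with Lemma~\ref{less} (the biconditional between index order and $j$-order along $C$) — to show the images $j(x_{\beta_\gamma})$ cannot bunch up below $j(\hat x)$ in a way compatible with $z$ being a genuine limit point with bounded index. I would also need to handle the singular-$\mu$ case, but as the authors note the club $C$ is homeomorphic to $\cf(\mu)$ and the same pressing-down/continuity bookkeeping goes through verbatim, so I would simply remark that the regular and singular cases are identical for this argument.
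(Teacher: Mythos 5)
Your overall skeleton matches the paper's: a limit point $z$ of $T$ either has $f$-image $\mu$ (and is then $\hat{x}$ by the normalization of $f$), or has $f$-image some $\beta_{\delta} < \mu$ with $\delta$ a limit stage of the enumeration, and in the latter case one must show $z$ is the point $x_{\beta_{\delta}}$, which lies in $T$ by the limit step of the construction. The trouble is that this second case --- which you yourself flag as the main obstacle --- is exactly where your argument stops being an argument. A net of indices accumulating at $\beta_{\delta}$ is eventually $\leq \beta_{\delta}$ in the order topology (no net converges to an ordinal from strictly above), so your clause ``for indices $\gamma_i$ eventually $> \beta_{\delta}$'' describes a situation that never occurs; for the indices that do occur (those increasing to $\beta_{\delta}$), Lemma~\ref{less} gives the opposite, strict inequality $j(x_{\beta_{\gamma_i}}) < j(x_{\beta_{\delta}})$, and passing to the limit yields only $j(z) \leq j(x_{\beta_{\delta}})$.

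More importantly, Lemma~\ref{less} speaks only about points of $A$, i.e.\ about the chosen representatives $x_{\eta}$, so it cannot ``pin down'' $z$ until you know $z \in A$. A priori $z$ is just some point of the fibre $f^{-1}(\beta_{\delta})$, which may contain points other than the representative $x_{\beta_{\delta}}$ chosen (arbitrarily) in the construction of $A$; continuity gives $f(z) = \beta_{\delta}$ and $j(z) = \sup_{\alpha < \delta} j(x_{\beta_{\alpha}})$, but neither of these by itself identifies $z$ with $x_{\beta_{\delta}}$. This is precisely the step the paper's proof supplies: it first argues $j(z) = j(x_{\beta_{\delta}})$ and hence $z \in A$, so $z = x_{\eta}$ for some $\eta \in M$, and only then applies Lemma~\ref{less} to get $\eta \geq \beta_{\delta}$, while $z$ lying in the closure of $\left\{x_{\beta_{\alpha}} : \alpha < \delta\right\}$ forces $\eta \leq \beta_{\delta}$, whence $z = x_{\beta_{\delta}} \in T$. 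Your proposal contains no substitute for the claims $j(z) = j(x_{\beta_{\delta}})$ and $z \in A$ --- the appeal to ``the strict jump built into the construction'' restates the desired conclusion rather than proving it --- so the central step is a genuine gap. (The cofinal case, $f(z) = \mu = f(\hat{x})$ hence $z = \hat{x}$, is fine modulo the same implicit normalization $f(\hat{x}) = \mu$ that the paper uses; the aside about first countability is false for compact scattered spaces such as $\omega_1 + 1$, but your subnet fallback covers that.)
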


\begin{proof} Let $z \in \overline{T}$; we know if $z = \hat{x}$, then $z$ is already an element of our set. Therefore assume $z \neq \hat{x}$. Take $\left\{\beta_{\alpha}: j(x_{\beta_{\alpha}}) < z \right\}$. We know this set has to be unbounded in $j(z)$ by continuity of $j$. Let $\delta$ be the least ordinal greater than every $\alpha$ in this set, and consider $x_{\beta_{\delta}}$. 

By continuity of $f$ and $j$, and by construction of $T$, we know $\left\{\beta_{\alpha}: j(x_{\beta_{\alpha}}) < z \right\}$ must also be unbounded in $z$; therefore $j(z) = j(x_{\beta_{\delta}})$. (This is a result of how we chose $x_{\beta_{\delta}}$ in our construction of $A$, and our choice of $\beta_{\delta}$ in our construction of $T$.) Thus $z \in A$, and may be written as $x_{\eta}$. However, by Lemma~\ref{less}, $\eta$ must be greater than or equal to $\beta_{\delta}$. Since $z = x_{\eta}$ is in the closure of $\left\{\beta_{\alpha}: j(x_{\beta_{\alpha}}) < z \right\}$, $\eta$ cannot be greater than $\beta_{\delta}$; thus $\eta$ must equal $\beta_{\delta}$, and hence $z \in T$. $T$, therefore, contains all of its limit points. \end{proof}

Using these lemmas, we may now show that $(T,f)$ is a semi-spine of $X$. 

\begin{lemma} \label{sp23} $(T,f)$ is a semi-spine of $X$. \end{lemma}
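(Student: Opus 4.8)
The goal is to verify that the pair $(T,f)$ satisfies properties $(2)$ and $(3)$ of Definition~\ref{spine}, where $f$ is the cleaving function from the \textbf{Construction of $A$} and $T$ is the set built in the \textbf{Construction of $T$}. Property $(2)$ requires that $f|_T$ be an embedding of $T$ into $Y = \mu+1$, and property $(3)$ requires that $f(T)$ be club in $f(\hat{x}) = \mu$. The plan is to read off both properties from the lemmas already established in this section, so the work is essentially assembling what we have.

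First I would handle property $(2)$. By the \textbf{Construction of $A$}, we arranged that $f^{-1}(f(y)) = \{y\}$ for every $y$ in the relevant set, and in particular $f|_A$ is injective; since $T \subseteq A$, the restriction $f|_T$ is injective as well. So $f|_T$ is a continuous bijection onto its image $f(T) \subseteq \mu$. To upgrade this to an embedding I would argue that $f|_T$ is in fact a homeomorphism onto $f(T)$: both $T$ (as a closed subspace of the compactum $X$, by Lemma~\ref{closed2} applied to $T \cup \{\hat x\}$ minus $\hat x$, or rather $T$ itself together with the fact that $T \cup \{\hat x\}$ is closed) and $f(T)$ are scattered, and a continuous bijection from a space in which every point has a neighborhood base of clopen sets can be checked to be open by tracking clopen intervals — alternatively, since $f|_A$ is injective and $f$ is continuous on the compact space $X$, for any $x_{\beta_\gamma} \in T$ a basic clopen neighborhood in $X$ maps to a relatively clopen neighborhood of $f(x_{\beta_\gamma})$ in $f(T)$, because $f$ maps clopen sets of $X$ to (relatively) clopen subsets of $f(X)$ after the normalization $f^{-1}(f(y)) = \{y\}$. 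This gives that $f|_T$ is an embedding.

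Next I would handle property $(3)$. We need $f(T)$ to be club in $\mu$. But $f(T) = \{\beta_\gamma : x_{\beta_\gamma} \in T\} = \{f(x_{\beta_\gamma})\}$, since $f(x_{\beta_\gamma}) = \beta_\gamma$ by the construction of $A$. Lemma~\ref{closed1} states precisely that $\{\beta_\alpha : x_{\beta_\alpha} \in \hat T\}$ is club in $\mu$, so $f(T)$ is club in $\mu = f(\hat x)$, which is property $(3)$. (Here I would reconcile the notation $\hat T$ versus $T$ with the construction; they denote the same set of selected points.)

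The main obstacle, such as it is, is the verification that $f|_T$ is not merely a continuous bijection but genuinely an embedding — i.e. that the inverse is continuous. This is where one must use the normalization $f^{-1}(f(y)) = \{y\}$ together with compactness of $X$ to argue that $f$ carries clopen subsets of $X$ to relatively clopen subsets of $f(X)$, and hence clopen subsets of $T$ to relatively clopen subsets of $f(T)$; since $T$ is scattered and first-countable-like in the Cantor-Bendixson sense, its topology is generated by such clopen sets, and openness of $f|_T$ follows. Everything else is a direct citation of Lemma~\ref{closed1}, Lemma~\ref{closed2}, and the construction. I would therefore keep the proof short: note injectivity from the construction of $A$, invoke Lemma~\ref{closed2} for closedness of $T \cup \{\hat x\}$, give the one-line clopen-image argument for the embedding, and cite Lemma~\ref{closed1} for clubness.
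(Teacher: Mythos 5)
Your overall assembly is the same as the paper's: injectivity of $f|_T$ from the construction of $A$, closedness of $T \cup \{\hat{x}\}$ from Lemma~\ref{closed2}, and clubness of $f(T)=\{\beta_\gamma\}$ from Lemma~\ref{closed1} (the paper simply says unboundedness is obvious, but citing Lemma~\ref{closed1} is the same content). The place where your write-up goes wrong is the justification of the embedding. Your key claim --- that after the normalization $f^{-1}(f(y)) = \{y\}$ the map $f$ carries clopen subsets of $X$ to relatively clopen subsets of $f(X)$ --- is not justified: that normalization was arranged only for $y \in I_0(X) \cup \{\hat{x}\}$, and $f$ is in general not injective on the rest of $X$, so a fiber of $f$ over a non-isolated point can meet both a clopen set $U$ and its complement, in which case $f(U)$ is closed (by compactness) but need not be relatively open in $f(X)$. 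So the ``one-line clopen-image argument'' does not stand as stated.

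The gap is easily repaired, and the repair is exactly what the paper's proof implicitly uses: by Lemma~\ref{closed2}, $T \cup \{\hat{x}\}$ is closed in the compactum $X$, hence compact; $f$ restricted to it is a continuous injection into the Hausdorff space $\lambda$, and a continuous injection from a compact space into a Hausdorff space is a homeomorphism onto its image. Restricting this homeomorphism to $T$ gives that $f|_T$ is an embedding, with no appeal to clopen images or to scatteredness. Note also that injectivity on a non-compact set such as $T$ alone would not suffice --- you must pass through the compact set $T \cup \{\hat{x}\}$, which is precisely why Lemma~\ref{closed2} is the essential ingredient here.
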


\begin{proof} By Lemma~\ref{closed2}, $T \cup \left\{\hat{x}\right\}$ is closed in $X$; it is also obvious that $f|_{T \cup \left\{\hat{x}\right\}}$ is injective. Therefore $f|_{T}$ is an embedding. It is also obviously unbounded in $f(X) \setminus \left\{f(\hat{x})\right\}$. \end{proof}

The problem we are now faced with is if $(T,f)$ satisfies property $(1)$ of Definition~\ref{spine}. We consider this in the next section.

\section{Finding the Spine of $X$}

We now know that $(T,f)$ is a semi-spine of $X$. In order to show $(1)$ holds, however, we must ensure no elements of $X \setminus T$ are mapped onto the image of $T$. We do so by showing that if $X$ is in fact cleavable over $\lambda$, then for some closed subset of $f(X \setminus \left\{\hat{x}\right\})$, if $\beta$ is in this closed subset, then $|f^{-1}(\beta)| = 1$. This result is found in Lemma~\ref{sp1}. 

We begin, however, with a definition, and a few observations. 

\begin{definition} If $\beta$ is any ordinal, then we use $\beta^*$ to refer to $\beta$ with reversed order, and the order topology. For example, the least element of $(\omega+1)^*$ is $\omega$, and the greatest element is $0$. $\omega^*$ has no least element. \end{definition}

\begin{lemma} \label{left} If $X$ is an infinite compactum cleavable over an ordinal such that $X$ is simple with $\hat{x}$, then $X$ contains a subset homeomorphic to $\pcf(\hat{x}) + 1$. \end{lemma}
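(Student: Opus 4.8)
The plan is to build the subset homeomorphically copying $\pcf(\hat x)+1$ directly inside $X$, using the cleaving function $f$ from the \textbf{Construction of $A$} together with the club $M$ of order type $\cf(\mu) = \pcf(\hat x)$. Recall that in that construction we fixed $f$ cleaving along $I_0(X) \cup \{\hat x\}$, normalised so $f^{-1}(f(y)) = \{y\}$ for each $y \in I_0(X)\cup\{\hat x\}$, and took $f(X) = \mu+1$ with $M \subseteq \mu$ a cofinal club homeomorphic to $\kappa := \cf(\mu) = \pcf(\hat x)$. The natural candidate is $Z := \{x_\alpha : \alpha \in M\} \cup \{\hat x\}$, i.e. the set $A$ from the construction together with the top point. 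First I would argue $Z$ is closed in $X$: if $z \in \overline{Z}\setminus\{\hat x\}$, then $f(z)$ lies in the closure of $M$, hence in $M$ (as $M$ is club in $\mu$, hence closed below $\mu$), or $f(z) = \mu$; in the latter case $z \in f^{-1}(\mu)$ but $\mu = f(\hat x)$ so $z = \hat x$, contradiction; in the former case $f(z) = \alpha \in M$, and since $f$ separates points of the isolated level and $z$ is a limit of points of $Z$ mapping cofinally into $\alpha$ under $f$, continuity forces $z = x_\alpha \in Z$. Thus $Z$ is compact.

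Next I would identify the topology on $Z$. Since $f$ is continuous and injective on $Z$ (injective on $A$ by construction, and $f(\hat x) = \mu \notin M$), and $Z$ is compact $T_2$, the restriction $f|_Z : Z \to f(Z) = M \cup \{\mu\}$ is a homeomorphism onto its image. Now $M \cup \{\mu\}$ is a closed subset of the ordinal $\mu+1$, so by Lemma~\ref{ord} it is homeomorphic to an ordinal; since $M$ has order type $\kappa$ and is club (hence cofinal) in $\mu$, and we append the single top point $\mu$, the resulting ordinal is $\kappa + 1 = \pcf(\hat x) + 1$. This gives the desired embedded copy.

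The one place that needs care — and what I expect to be the main obstacle — is the claim that $z \in \overline{Z}\setminus\{\hat x\}$ forces $f(z) \in M$ and then $z = x_{f(z)}$. The first half is fine since $M$ is closed below $\mu$; the subtlety is in the second half, ruling out that $z$ could be some \emph{other} preimage of $\alpha \in M$ distinct from $x_\alpha$. Here one uses that $X$ is simple with $\hat x$, so every point other than $\hat x$ has Cantor--Bendixson rank bounded strictly below $\CB(X)$, and in fact $z \in \overline{Z}$ with $z \notin Z$ would need $z$ to be a limit of the $x_\alpha$'s; since those map under $f$ to the strictly increasing cofinal-in-$\alpha$ net $M \cap \alpha$, continuity of $f$ pins $f(z)$ to $\alpha = \sup(M\cap\alpha)$ only when $\alpha$ is a limit point of $M$, and there the unique point of $Z$ in the relevant neighbourhood filter is $x_\alpha$ itself — otherwise a second preimage $z \neq x_\alpha$ of $\alpha$ would contradict either the finiteness of fibres combined with closedness, or would itself have to be isolated in $Z$ and hence not a limit point. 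Making this last dichotomy airtight (possibly by shrinking $M$ to its set of limit points, which is still club of the same order type) is the real content; everything else is bookkeeping with Lemma~\ref{ord} and the fact that continuous bijections from compacta are homeomorphisms.
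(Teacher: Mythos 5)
There is a genuine gap, and it sits exactly where you suspected: the claim that $Z = A \cup \{\hat{x}\}$ is closed in $X$. The normalisation in the \textbf{Construction of $A$} only gives singleton fibres over $f(I_0(X) \cup \{\hat{x}\})$; a point $\alpha \in M$ need not be the image of an isolated point, so $f^{-1}(\alpha)$ can contain many points (even infinitely many) besides the arbitrarily chosen $x_\alpha$. If $z$ is a limit point of $\{x_\beta : \beta \in M \cap \alpha\}$, continuity of $f$ only pins down $f(z) = \alpha$; nothing forces $z = x_\alpha$, and your fallback arguments do not close this off: there is no ``finiteness of fibres'' available at non-isolated levels, and a second preimage $z \in f^{-1}(\alpha) \setminus \{x_\alpha\}$ is under no obligation to be isolated in $Z$ --- it is precisely a limit point of $Z$ lying outside $Z$. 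Shrinking $M$ to its limit points does not help either, since the problem is the trace of $\overline{A}$ on a single fibre, not the order type of the index set; even choosing each $x_\alpha$ recursively inside $\overline{\{x_\beta : \beta \in M \cap \alpha\}}$ fails, because that closure can meet the fibre in several points and you can only put one of them into $A$.

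This is exactly the difficulty the paper's Section~2 machinery exists to overcome, and it is why the lemma cannot be had for free from $f$ alone. The paper takes a second function $j$ cleaving along $A$, uses the Pressing Down Lemma to extract the club $C$ (Lemma~\ref{pdl}), and prunes $A$ to $T$ so that in Lemma~\ref{closed2} a limit point $z$ satisfies $j(z) = j(x_{\beta_\delta})$ and is then forced \emph{into} $A$ because $j$ separates $A$ from $X \setminus A$; closedness of $T \cup \{\hat{x}\}$ is what makes $(T,f)$ a semi-spine (Lemma~\ref{sp23}), and Lemma~\ref{left} is then read off from that semi-spine just as in your final paragraph (compactness plus a continuous injection gives an embedding onto a club of order type $\cf(\mu)=\pcf(\hat{x})$ together with its supremum, hence a copy of $\pcf(\hat{x})+1$). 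So the bookkeeping half of your argument matches the paper; the missing ingredient is the second cleaving function and the pressing-down argument that deliver a closed candidate set, and without some substitute for that step your proof does not go through.
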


\begin{proof} This follows from Lemma~\ref{sp23}. \end{proof}

\begin{lemma} \label{left2} There do not exist ordinals $\alpha, \beta, \gamma$, where $\cf(\alpha)$ or $\cf(\beta)$ are uncountable, such that $X = \alpha + 1 + \beta^*$ is cleavable over $\gamma$. \end{lemma}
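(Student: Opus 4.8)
The plan is to argue by contradiction: suppose such $\alpha,\beta,\gamma$ exist with, say, $\cf(\alpha)$ uncountable (the case $\cf(\beta)$ uncountable being symmetric, since $X = \alpha+1+\beta^*$ and $X^* = \beta+1+\alpha^*$ have the same cleavability properties). Note first that $X$ is an infinite compactum: it is the ordered sum of the compact space $\alpha+1$ and the compact space $\beta^*$, hence compact, and it is clearly $T_5$, so cleavability over $\gamma$ makes sense and by Lemma~\ref{scattered} forces $\gamma$ (and $X$) to be scattered. The point $\hat x = \alpha$ (the top of the left block) is the unique point of $X$ of maximal Cantor--Bendixson rank among the left block, and $\pcf(\hat x) = \cf(\alpha) > \omega$. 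The idea is that the two ``sides'' of $\hat x$ — the set $L$ of points below $\hat x$ (a copy of $\alpha$) and the set $R$ of points above $\hat x$ (a copy of $\beta^*$, reaching up toward $\hat x$ from the other side) — cannot be simultaneously handled by a single continuous map into an ordinal when $\hat x$ has uncountable pseudo-cofinality on the left.

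Here is the mechanism I would use. Apply the machinery of Sections~2--3 to the left block: shrinking to the clopen subspace $(\alpha+1)$ of $X$ if convenient, or working directly, we obtain from Lemma~\ref{sp23} a semi-spine $(T,f)$ with $f$ cleaving along $T$, $f|_T$ an embedding, and $f(T)$ club in $f(\hat x)$, where $T$ consists of points approaching $\hat x$ from the left and $\cf(\mu) = \cf(f(\hat x)) = \cf(\alpha) =: \kappa > \omega$. Now consider the behaviour of $f$ on $R$. Since $R$ is a copy of $\beta^*$ with $\hat x$ as its (unique) limit from that side, and $f$ cleaves along $\{\hat x\}$, the values $f$ takes on points of $R$ approaching $\hat x$ must also be cofinal in $f(\hat x)$; so $\cf(\beta)\ge$ something, but more to the point both $T$ (inside $L$) and a cofinal sequence inside $R$ get mapped cofinally into $f(\hat x)$, which has uncountable cofinality $\kappa$. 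The crux is then a pressing-down argument exactly in the spirit of Lemma~\ref{pdl}: pick a cofinal $\kappa$-sequence $\langle r_\xi : \xi < \kappa\rangle$ in $R$ converging to $\hat x$, and interleave it with the spine points $\langle x_{\beta_\xi} : \xi<\kappa\rangle$ of $T$. Continuity of $f$ at $\hat x$ forces, on a club of $\xi<\kappa$, that $f(r_\xi)$ lies strictly between consecutive spine values $f(x_{\beta_\xi})$ and $f(x_{\beta_{\xi+1}})$. But now take a continuous $h$ cleaving $X$ along the set $A = \{x_{\beta_\xi} : \xi \text{ even}\} \cup \{r_\xi : \xi \text{ odd}\}$ (or some such Bernstein-type partition of an $\omega_1$-like club): $A$ has $\hat x$ in its closure and so does $X\setminus A$, yet any continuous map into an ordinal is eventually monotone on a club of a regular uncountable cardinal (again Fodor), so it cannot separate two sets each of which is cofinal in $\hat x$ and ``densely interleaved'' along that club. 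This is the contradiction.

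The step I expect to be the main obstacle is making the interleaving rigorous: I must guarantee that the spine points of $T$ (which live on the $L$-side) and a suitably chosen cofinal sequence in $R$ can be arranged so that, in the ordering of $f$-values near $f(\hat x)$, they genuinely alternate on a club — and that this alternation is preserved (up to a club) under an arbitrary continuous $h:X\to\gamma$, so that no such $h$ can cleave along a set built by taking alternate blocks. Controlling this requires combining the continuity of $f$ at $\hat x$ from both sides with a double application of the Pressing Down Lemma (once for $f$, once for $h$), intersecting the two resulting clubs, and checking that on the intersection the set $A$ and its complement both accumulate at $\hat x$. The remaining parts — that $X$ is a scattered compactum so Lemmas~\ref{scattered}, \ref{sp23}, \ref{left} apply, and that $\hat x$ is simple in the relevant clopen piece — are routine. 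I would also remark that the symmetric hypothesis is what makes the argument genuinely two-sided: if only $\cf(\alpha)>\omega$, the sequence in $R$ may be countable, but it is still cofinal in $\hat x$, which is all the pressing-down argument needs, since the regular uncountable cardinal on which we press down is $\kappa=\cf(\alpha)$ itself.
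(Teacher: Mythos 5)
Your proposal does not close the lemma: the step you yourself flag as ``the main obstacle'' is in fact the entire content of the statement. Everything up to the construction of the alternating set $A$ is scaffolding; the contradiction is supposed to come from the assertion that ``any continuous map into an ordinal is eventually monotone on a club \dots so it cannot separate two sets each of which is cofinal in $\hat x$ and densely interleaved,'' and this is neither proved nor correct as stated. The concrete failure mode is this: if $h$ cleaves along your $A$, then only the points of $A$ are forbidden from taking the value $h(\hat x)$ (since $\hat x \in X \setminus A$). The points of $X\setminus A$ that you are counting on — the odd/even blocks you placed in the complement — may perfectly well be mapped \emph{exactly} to $h(\hat x)$, in which case $h(X \setminus A)$ contributes no unbounded set below $h(\hat x)$ and no ``two clubs must meet'' contradiction is available. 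Ruling this out requires choosing $A$ so that continuity together with a cofinality argument forbids it, and your alternating choice does not obviously have this feature; you give no argument. There are also smaller unaddressed wrinkles: Lemma~\ref{sp23} is stated for $X$ simple with $\hat x$, which can fail for $\alpha+1+\beta^*$ (take $\beta$ of larger Cantor--Bendixson rank than $\alpha$), and if you instead run the construction inside the closed (not clopen) left block $\alpha+1$, the resulting $f$ is defined only on that block, so ``the behaviour of $f$ on $R$'' is not meaningful without further work. Finally, your claim that in the asymmetric case ($\cf(\alpha)>\omega$, $\cf(\beta)=\omega$) the same pressing-down interleaving works is unconvincing: a countable sequence cannot be interleaved along a club of the uncountable regular $\kappa$, and no regressive function is specified.

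The paper's proof is far more elementary and bypasses the Section 2--3 machinery entirely. First, if some $f$ cleaves along the joining point $z$, then points approaching $z$ from either side have images converging to, but avoiding, $f(z)$, which forces $\cf(\alpha)=\cf(f(z))=\cf(\beta)$; this immediately disposes of the asymmetric case. When both cofinalities are uncountable, one takes $A=\alpha' \cup I_0(1+\beta^*)$: the left limit points lie in $A$, so their images avoid $g(z)$ and (with closedness from compactness) form a club in $g(z)$; and the deliberate placement of \emph{all} isolated points of the right block into $A$ is what lets continuity, together with the uncountability of $\cf(\beta)$, force the images of the right limit points to be club in $g(z)$ as well --- precisely the step your alternating set cannot deliver. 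The two clubs intersect, yet they lie on opposite sides of the cleaving, a contradiction. To repair your write-up you would either have to supply the missing separation principle (and it would have to be proved in a form strong enough to handle complement points landing on $h(\hat x)$), or redesign $A$ along the lines above, at which point the spine machinery you invoke becomes unnecessary.
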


\begin{proof} Let $z = \left\{\alpha \right\} = \left\{\beta^*\right\}$; that is, $z$ is the point that joins $\alpha$ and $\beta^*$. If $f$ cleaves along $\left\{z\right\}$, we can immediately see that $\cf(\alpha)$ must equal $\cf(\beta)$. 

Now let $A = \alpha' \cup I_0(1+\beta^*)$. We claim no function can cleave along $A$.

To see this, assume for a contradiction that some function $g$ does cleave along $A$. Note that $g((\alpha+1)')$ and $g((1+ \beta^*)')$ must be closed in $g(X)$, since $X$ is compact, $\gamma$ is Hausdorff, and $g$ is continuous. Also note that from the way we have chosen $A$, $g(\alpha')$ and $g((\beta^*)')$ must be unbounded in $g(z)$: $g(\alpha')$ must be unbounded since $z \notin A$, and $g((\beta^*)')$ must be unbounded since $g$ is continuous, and $\cf(\beta)$ is uncountable. As $g(\alpha')$ and $g((\beta^*)')$ must be club in $g(z)$, $g(\alpha') \cap g((\beta^*)')$ must be non-empty, a contradiction since $g$ is assumed to cleave them apart. \end{proof}

\begin{lemma} \label{not2} If $X$ is an infinite compactum cleavable over an ordinal such that $X$ is simple with $\hat{x}$, and $\pcf(\hat{x}) > \omega$, then $X$ cannot contain two subsets homeomorphic to $\pcf(\hat{x}+1)$, $A$ and $B$, such that $A \cap B = \left\{\hat{x}\right\}$. \end{lemma}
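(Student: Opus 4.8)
The plan is to glue $A$ and $B$ together along $\hat{x}$ and to recognise the resulting space as one of the shape forbidden by Lemma~\ref{left2}. Write $\kappa = \pcf(\hat{x})$; by hypothesis $\kappa > \omega$, and since $\pcf(\hat{x})$ is by definition the cofinality of an ordinal it is a regular cardinal. The first step is to reduce to the case in which $\hat{x}$ is the greatest element of each of $A$ and $B$ under the homeomorphisms $A \cong \kappa+1 \cong B$. This is forced once $\hat{x}$ is a non-isolated point of the copy in question (which is the situation relevant to the paper, where such copies arise as ranges of spines and semi-spines with $\hat{x}$ cofinal): if $\hat{x}$ occupied a position $\delta < \kappa$ in $A$, then $\hat{x}$ would be a limit of a set of fewer than $\kappa$ points of $A$, and a function cleaving along $\{\hat{x}\}$ — whose value at $\hat{x}$ has cofinality $\pcf(\hat{x}) = \kappa$ — would, restricted to $A$, have to send that set cofinally below that value, contradicting the regularity of $\kappa$. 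So $\hat{x}$ is the top of $A$, and symmetrically the top of $B$.

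Next, linearly order $A \cup B$ so that $\hat{x}$ is a ``peak'': keep on $A \setminus \{\hat{x}\}$ the order inherited from $A \cong \kappa+1$, place $\hat{x}$ above it, and above $\hat{x}$ place $B \setminus \{\hat{x}\}$ carrying the \emph{reverse} of its order from $B \cong \kappa+1$. Then $A \cup B$ with this order is order-isomorphic to $\kappa + 1 + \kappa^{*}$. The crucial claim is that this order topology agrees with the subspace topology $A \cup B$ inherits from $X$; equivalently, the canonical order bijection $\phi \colon \kappa + 1 + \kappa^{*} \to A \cup B$ is a homeomorphism. Because $\kappa + 1 + \kappa^{*}$ is compact (it is the union of two closed subspaces, each homeomorphic to the compact space $\kappa+1$) and $A \cup B$ is Hausdorff, it suffices to show that $\phi$ is continuous. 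Away from $\hat{x}$ this is immediate, since on each half $\phi$ is just the given homeomorphism onto $A$, respectively onto $B$. At $\hat{x}$: any subspace-neighbourhood $V$ of $\hat{x}$ in $A \cup B$ meets $A$ in a neighbourhood of $\hat{x}$ in $A$ and meets $B$ in a neighbourhood of $\hat{x}$ in $B$, so $V$ contains a final segment of the ``$A$-wing'' together with a final segment of the ``$B$-wing'' — which is exactly a basic neighbourhood of the peak of $\kappa + 1 + \kappa^{*}$. Hence $\phi^{-1}(V)$ is a neighbourhood of the peak, and $A \cup B \cong \kappa + 1 + \kappa^{*}$.

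Finally, cleavability is inherited by arbitrary subspaces: if $g \colon X \to \lambda$ cleaves $X$ over the ordinal $\lambda$ along some $S \subseteq A \cup B$, then $g|_{A \cup B}$ cleaves $A \cup B$ over $\lambda$ along $S$, because $(A \cup B) \setminus S \subseteq X \setminus S$. Thus $\kappa + 1 + \kappa^{*} \cong A \cup B$ is cleavable over an ordinal. But $\kappa + 1 + \kappa^{*}$ has the form $\alpha + 1 + \beta^{*}$ with $\alpha = \beta = \kappa$ and $\cf(\alpha) = \kappa$ uncountable, so this contradicts Lemma~\ref{left2}, completing the proof. I expect the main obstacle to be the middle step — verifying that gluing the two copies of $\kappa+1$ along their shared top point yields exactly $\kappa+1+\kappa^{*}$, i.e.\ that every neighbourhood of $\hat{x}$ in $X$ truncates both wings at once; this is where it is essential that $A$ and $B$ are closed in $X$ (so that they genuinely carry the order topology of $\kappa+1$) and that they meet only in $\hat{x}$. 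A secondary subtlety is the reduction in the first step, namely ensuring that $\hat{x}$ really sits at the top of each copy; the subspace-cleavability argument in the last step is routine.
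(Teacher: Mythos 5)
Your proof is correct and is exactly the argument the paper's one-line proof intends: glue the two copies of $\kappa+1$ along $\hat{x}$ to obtain $\kappa+1+\kappa^{*}$, note that cleavability passes to subspaces, and invoke Lemma~\ref{left2} with $\alpha=\beta=\kappa$ of uncountable cofinality. Your caveat about the first step is well taken and worth making explicit: as literally stated the lemma needs $\hat{x}$ to be non-isolated in each copy, since otherwise there are counterexamples (in $X=\omega_1^{2}+1$, which is simple with $\hat{x}=\omega_1^{2}$ and cleavable over itself, the sets $[1,\omega_1]\cup\{\omega_1^{2}\}$ and $[\omega_1+1,\omega_1\cdot 2]\cup\{\omega_1^{2}\}$ are each homeomorphic to $\omega_1+1$ and meet only in $\hat{x}$); but in the paper's applications $A$ and $B$ come from semi-spines, where $\hat{x}$ is a limit point of each copy, so your regularity argument for placing $\hat{x}$ at the top covers every case that is actually used.
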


\begin{proof} This follows from Lemmas~\ref{left} and \ref{left2}. \end{proof}

We rely on this lemma to prove $X$ must have a spine. We do so by trying to construct sets similar to $A$ and $T$, disjoint from $A$ and $T$, and showing that it cannot be done; we must therefore have that some subset of $T$, along with $f$, must function as a spine for $X$.

We begin by constructing those sets similar to $A$ and $T$.

\begin{quote} \textbf{Construction of $\hat{A}$ and $S \subset \hat{A}$}: 

Let $X$, $f$, and $M$ be as described in \textbf{Construction of $A$}. Let $\hat{A}$ be chosen in the following way: for each $\alpha \in M$, let $z_{\alpha}$ be an element of $f^{-1}(\alpha) \setminus A$ if $f^{-1}(\alpha) \setminus A$ is non-empty, and equal to $x_{\alpha}$ if $f^{-1}(\alpha) \setminus A$ is empty. 

Let $\hat{A} = \left\{z_{\alpha}: \alpha \in M \right\}$. 

Let $S \subseteq \hat{A}$ be constructed in the same way $T \subseteq A$ was constructed. \end{quote}

We will eventually show $(S \cap T,f)$ is a spine of $X$, and we begin to do so with a few consequences of Lemma~\ref{not2}.

\begin{lemma} \label{nonempty} The set $S \cap T$ is non-empty. \end{lemma}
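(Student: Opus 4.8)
The plan is to argue by contradiction using Lemma~\ref{not2}. Suppose $S \cap T = \emptyset$. The sets $T$ and $S$ were constructed in parallel: $T \subseteq A$ and $S \subseteq \hat{A}$, where for each $\alpha \in M$ the point $z_\alpha \in \hat A$ is chosen in $f^{-1}(\alpha) \setminus A$ whenever that set is non-empty. The first step is to observe that, since $T \cup \{\hat x\}$ and $S \cup \{\hat x\}$ are both closed in $X$ (by Lemma~\ref{closed2} applied to each construction) and $f$ is injective on each of them (being a semi-spine by Lemma~\ref{sp23}), both $T \cup \{\hat x\}$ and $S \cup \{\hat x\}$ are closed subsets of $X$ whose $f$-images are club in $f(\hat x)$; hence by Lemma~\ref{ord} each of $\overline{T} = T \cup \{\hat x\}$ and $\overline{S} = S \cup \{\hat x\}$ is homeomorphic to a closed subset of an ordinal, and in fact — since $f|_{T}$ and $f|_{S}$ are embeddings whose images are cofinal in $\mu$ — each is homeomorphic to $\cf(\mu) + 1 = \pcf(\hat x) + 1$.

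The second step is to record that $\pcf(\hat x) > \omega$ in the setting at hand (this is a standing hypothesis in the \textbf{Construction of $A$}, and is exactly the hypothesis of Lemma~\ref{not2}), and that $\overline{T} \cap \overline{S} = \{\hat x\}$: indeed $\overline{T} \cap \overline{S} = (T \cap S) \cup \{\hat x\} = \{\hat x\}$ under our assumption $T \cap S = \emptyset$. So $A := \overline{T}$ and $B := \overline{S}$ are two subsets of $X$, each homeomorphic to $\pcf(\hat x) + 1$, meeting exactly in $\{\hat x\}$. This directly contradicts Lemma~\ref{not2}. Therefore $S \cap T \neq \emptyset$.

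The main subtlety — and the step I would be most careful about — is justifying that $T$ and $S$ really are \emph{disjoint as constructed} in the bad case, rather than merely having disjoint index sets, and more importantly that the recursive constructions of $\beta_\gamma$ for $T$ (using the cleaving function $j$ for $A$) and the analogous ordinals for $S$ (using a possibly different cleaving function for $\hat A$) do not secretly force a common point through $\hat x$ in a way that invalidates the clean picture $\overline T \cap \overline S = \{\hat x\}$. One must check that the limit points added in the \textbf{Limit Step} of each construction are genuinely the unique limit point $\hat x$ only at the very top, and that at every earlier limit stage the added point lies in $A$ (resp. $\hat A$) and is distinct across the two constructions whenever the index sets have been kept apart. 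A second small point to verify is that $f^{-1}(\alpha) \setminus A$ is non-empty for ``enough'' $\alpha$ — namely for a club of $\alpha \in M$ — so that $\hat A$ genuinely differs from $A$ on a club and the resulting $S$ avoids $T$ on a club; if $f^{-1}(\alpha) \setminus A = \emptyset$ for a club of $\alpha$, then in fact $z_\alpha = x_\alpha$ there, $S$ and $T$ share a club, and $S \cap T$ is trivially non-empty. Splitting into these two cases — ``$f$ is essentially injective on a club'' versus ``$\hat A$ differs from $A$ on a club'' — and applying Lemma~\ref{not2} only in the latter, is the cleanest way to close the argument.
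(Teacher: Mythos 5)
Your proposal is correct and follows essentially the same route as the paper: assuming $S \cap T = \emptyset$, both $T \cup \{\hat{x}\}$ and $S \cup \{\hat{x}\}$ are copies of $\pcf(\hat{x})+1$ (via the semi-spine/closedness lemmas, i.e.\ Lemma~\ref{left}) meeting only in $\{\hat{x}\}$, contradicting Lemma~\ref{not2}. The extra caveats you raise are reasonable diligence but do not change the argument, which is the one the paper gives.
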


\begin{proof} If $S \cap T$ were empty, then following from Lemma~\ref{left}, $S \cup \left\{\hat{x}\right\}$ and $T \cup \left\{\hat{x}\right\}$ would be homeomorphic to $\pcf(\hat{x}) + 1$. This implies $S \cup \left\{\hat{x}\right\} \cup T$ is cleavable over $\lambda$, but this contradicts Lemma~\ref{not2}. \end{proof}

We can do even better than showing $S \cap T$ is non-empty:

\begin{lemma} \label{club} $f(S \cap T)$ is club in $f(X) \setminus \left\{f(\hat{x}) \right\}$. \end{lemma}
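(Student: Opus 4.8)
The plan is to argue by contradiction: suppose $f(S \cap T)$ is \emph{not} club in $f(X) \setminus \{f(\hat x)\}$. By Lemma~\ref{nonempty} we know $S \cap T \neq \emptyset$, and both $S \cup \{\hat x\}$ and $T \cup \{\hat x\}$ are closed in $X$ with $f$ injective on each (by Lemma~\ref{sp23} and the parallel construction of $S$); hence $f(S \cap T) = f(S \cup \{\hat x\}) \cap f(T \cup \{\hat x\}) \setminus \{f(\hat x)\}$ is the intersection of two sets each closed in $f(X)$, and so it is itself closed in $f(X) \setminus \{f(\hat x)\}$. Therefore the only way it can fail to be club is that it fails to be \emph{unbounded} in $f(X) \setminus \{f(\hat x)\}$: there is some $\alpha_0 < \mu$ above which $f(S)$ and $f(T)$ never agree, i.e.\ $f^{-1}(\alpha) \setminus A$ is nonempty and $z_\alpha \neq x_\alpha$ for cofinally many $\alpha \in M$, and moreover the ``surviving'' indices $\beta_\gamma$ chosen in the construction of $S$ and of $T$ diverge past $\alpha_0$.

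From this I would extract, working above $\alpha_0$, cofinal-in-$\mu$ subsets $M_S, M_T \subseteq M$ with $f(x_\alpha) = f(z_\alpha) = \alpha$ but $x_\alpha \neq z_\alpha$, and then pass to the closures: set $A^* = \overline{\{x_{\beta_\gamma} : \beta_\gamma > \alpha_0,\ x_{\beta_\gamma} \in T\}}$ and $B^* = \overline{\{z_{\beta_\gamma} : \beta_\gamma > \alpha_0,\ z_{\beta_\gamma} \in S\}}$. By the same argument as in Lemma~\ref{closed2}, each of these is of the form $T^* \cup \{\hat x\}$ and $S^* \cup \{\hat x\}$ with $f$ injective on it, $f$-image club in its own supremum, and that supremum cofinal in $f(X) \setminus \{f(\hat x)\}$. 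Because $\pcf(\hat x) > \omega$ and $\mu$ has uncountable cofinality, Lemma~\ref{left} (applied to the relevant closed subsets, or a direct rerun of its proof) upgrades each of $A^*$ and $B^*$ to a subset homeomorphic to $\pcf(\hat x) + 1$. The failure of unboundedness of $f(S \cap T)$ forces $A^* \cap B^* = \{\hat x\}$: any common point other than $\hat x$ would be some $x_\eta = z_\eta$ with $\eta$ arbitrarily large, contradicting that $f(S)$ and $f(T)$ disagree above $\alpha_0$. But now $A^* \cup B^*$ is a subspace of the compactum $X$ — hence cleavable over the same ordinal $\lambda$ — consisting of two copies of $\pcf(\hat x) + 1$ meeting exactly in $\hat x$, directly contradicting Lemma~\ref{not2}.

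The main obstacle, and the step I would spend the most care on, is the passage from ``$f(S \cap T)$ not club'' to the clean dichotomy ``not unbounded,'' and then from ``not unbounded'' to genuinely \emph{disjoint} (modulo $\hat x$) closed copies of $\pcf(\hat x)+1$. The subtlety is that $S$ and $T$ were built by two separate transfinite recursions over the \emph{same} club $C \subseteq \mu$ and the \emph{same} bookkeeping, so their surviving index sets need not literally be disjoint even when $f(S \cap T)$ is bounded; one must instead truncate above $\alpha_0$, re-run the closure argument of Lemma~\ref{closed2} on the tails, and check that the tails still carry enough structure (club image, correct supremum) to invoke Lemma~\ref{left}. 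One should also handle the singular-$\mu$ case, where $M$ is only a cofinal club homeomorphic to $\cf(\mu)$ rather than all of $\mu$; but as in Lemma~\ref{pdl}, replacing $\mu$ by $\cf(\mu)$ throughout makes no difference, since $\cf(\mu) = \pcf(\hat x) > \omega$ is what is actually used. Once the two disjoint copies are in hand, the contradiction with Lemma~\ref{not2} is immediate.
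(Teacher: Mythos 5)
Your overall strategy is the paper's: closedness is extracted from the closedness of $S \cup \{\hat{x}\}$ and $T \cup \{\hat{x}\}$, and unboundedness is obtained by the Lemma~\ref{nonempty} argument run on the tails above a putative bound, producing two copies of $\pcf(\hat{x})+1$ meeting only in $\hat{x}$ and contradicting Lemma~\ref{not2} (the paper literally says unboundedness ``is proved in the exact same way as in Lemma~\ref{nonempty}''). However, your closedness step contains an invalid inference. You claim $f(S \cap T) = f(S \cup \{\hat{x}\}) \cap f(T \cup \{\hat{x}\}) \setminus \{f(\hat{x})\}$ ``since $f$ is injective on each'' of the two sets. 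Injectivity on each piece separately does not give image-of-intersection equals intersection-of-images; for that you would need injectivity on $S \cup T$, which is exactly what is \emph{not} available here. Indeed, by the construction of $\hat{A}$, whenever $f^{-1}(\alpha) \setminus A \neq \emptyset$ the points $x_{\alpha} \in A$ and $z_{\alpha} \in \hat{A}$ are distinct, and nothing prevents an index $\alpha$ from being selected by both recursions, so that $\alpha \in f(S) \cap f(T)$ while $\alpha \notin f(S \cap T)$. Whether such indices exist is essentially the content being settled by Lemmas~\ref{club} and \ref{sp1}, so assuming the identity here is circular in spirit. The same conflation appears when you paraphrase ``$f(S \cap T)$ bounded by $\alpha_0$'' as ``$f(S)$ and $f(T)$ never agree above $\alpha_0$''; boundedness only says no point of $S \cap T$ maps above $\alpha_0$.

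Both defects are repairable, and the repair is the paper's own route. For closedness, do not intersect images: the set $(S \cup \{\hat{x}\}) \cap (T \cup \{\hat{x}\}) = (S \cap T) \cup \{\hat{x}\}$ is closed in the compact space $X$ (Lemma~\ref{closed2} applied to each factor), hence compact, so its continuous image $f(S \cap T) \cup \{f(\hat{x})\}$ is closed in $f(X)$, and therefore $f(S \cap T)$ is closed in $f(X) \setminus \{f(\hat{x})\}$. For the tail argument, conclude from a common point $w \neq \hat{x}$ of the two closed tails that $w \in S \cap T$ with $f(w) > \alpha_0$, contradicting boundedness directly (one such point suffices; no claim about all large indices is needed). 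With these adjustments your argument is correct and coincides with the intended proof.
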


\begin{proof} Unboundedness of $f(S \cap T)$ is proved in the exact same way as in Lemma~\ref{nonempty}. The fact that $f(S \cap T)$ is closed in $f(X) \setminus \left\{f(\hat{x}) \right\}$ follows from Lemma~\ref{closed2}, as $(S \cup \left\{\hat{x}\right\}) \cap (T \cup \left\{\hat{x}\right\})$ must be closed, and $f$ is continuous. \end{proof}

\begin{lemma} \label{sp1} For every $\beta \in f(S \cap T)$, $|f^{-1}(f(S \cap T))| = 1$. \end{lemma}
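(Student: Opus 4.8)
The plan is to reduce the claim to the defining property of $\hat{A}$ and argue by contradiction. Note first that $S\cap T$ is $f$-indexed by $M$: since $T\subseteq A=\{x_\gamma:\gamma\in M\}$ and $S\subseteq\hat{A}=\{z_\gamma:\gamma\in M\}$, with $f(x_\gamma)=f(z_\gamma)=\gamma$, we have $f(S\cap T)\subseteq M$. So fix $\beta\in f(S\cap T)$; then $\beta\in M$, so both $x_\beta\in A$ and $z_\beta\in\hat{A}$ are defined. I would then suppose, toward a contradiction, that $|f^{-1}(\beta)|\ge 2$.

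First I would pin down the point of $X$ that witnesses $\beta\in f(S\cap T)$. Choose $w\in S\cap T$ with $f(w)=\beta$. Because $w\in T\subseteq A=\{x_\gamma:\gamma\in M\}$ and $f|_A$ is injective (arranged in the Construction of $A$), $w$ is the unique element of $A$ in the fibre over $\beta$, i.e. $w=x_\beta$. Because $w\in S\subseteq\hat{A}=\{z_\gamma:\gamma\in M\}$ with $f(z_\gamma)=\gamma$, the same reasoning gives $w=z_\beta$. Hence $x_\beta=z_\beta$.

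Next I would invoke the Construction of $\hat{A}$ directly: the point $z_\beta$ was chosen inside $f^{-1}(\beta)\setminus A$ whenever that set is non-empty, and chosen to be $x_\beta$ only when $f^{-1}(\beta)\setminus A=\emptyset$. Since $f|_A$ is injective, $A\cap f^{-1}(\beta)=\{x_\beta\}$, so the assumption $|f^{-1}(\beta)|\ge 2$ forces $f^{-1}(\beta)\setminus A\ne\emptyset$, and therefore $z_\beta\notin A$. This contradicts $z_\beta=x_\beta\in A$. Hence $|f^{-1}(\beta)|=1$ for every $\beta\in f(S\cap T)$.

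There is essentially no obstacle here beyond bookkeeping: the content was front-loaded into the Construction of $\hat{A}$, which was deliberately built to avoid $A$ precisely over the fibres of $f$ that contain more than one point, so any overlap $S\cap T$ is forced to sit over singleton fibres. The only point that needs a word of care is that $\beta$ genuinely lies in the common index set $M$, so that $x_\beta$ and $z_\beta$ are defined and $f$ sends each to $\beta$; this follows from $f(A)=f(\hat{A})=M$. Together with Lemma~\ref{club}, this says $f$ is injective on $S\cap T$ and $f(S\cap T)$ is club in $f(X)\setminus\{f(\hat{x})\}$, which is exactly what is needed to upgrade the semi-spine $(T,f)$ to a genuine spine in the sequel.
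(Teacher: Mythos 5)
Your proof is correct and is essentially the argument the paper leaves implicit: since the unique element of $A$ over $\beta$ is $x_\beta$ and the unique element of $\hat{A}$ over $\beta$ is $z_\beta$, any $w \in S \cap T$ with $f(w)=\beta$ forces $x_\beta = z_\beta$, which by the construction of $\hat{A}$ happens only when $f^{-1}(\beta)\setminus A = \emptyset$, i.e. $f^{-1}(\beta)=\{x_\beta\}$. The paper's one-line proof ("follows from how we have constructed $S$ and $T$") is exactly this bookkeeping, so your write-up matches the intended approach, just made explicit.
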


\begin{proof} This follows from how we have constructed $S$ and $T$. \end{proof}

\begin{lemma} \label{lemmaspine} $(S \cap T, f)$ is a spine for $X$. \end{lemma}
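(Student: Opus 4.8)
The plan is to verify the three defining properties of a spine from Definition~\ref{spine} for the pair $(S \cap T, f)$, working throughout in the setting of the \textbf{Construction of $A$} and \textbf{Construction of $T$} (in particular with $\pcf(\hat{x}) > \omega$, so that $S \cap T$ is nontrivial by Lemma~\ref{nonempty}). Properties $(2)$ and $(3)$ will come almost for free from what has already been proved; the content is property $(1)$, that $f$ cleaves along $S \cap T$, and this is exactly where Lemma~\ref{sp1} is used.

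First I would record that $S \cap T \cup \{\hat{x}\} = (S \cup \{\hat{x}\}) \cap (T \cup \{\hat{x}\})$ is an intersection of two sets closed in $X$ by Lemma~\ref{closed2} (applied to $T$ and, by the identical argument using the construction of $S$, to $S$), hence is itself closed in $X$, and so compact. Since $f$ is continuous and injective on $T \cup \{\hat{x}\}$ — and therefore on the smaller set $S \cap T \cup \{\hat{x}\}$ — its restriction to this compact set is a closed continuous bijection onto its image, i.e.\ an embedding; restricting once more, $f|_{S \cap T}$ is an embedding into the ordinal, which is property $(2)$. Property $(3)$, that $f(S \cap T)$ is club in $f(\hat{x})$, is precisely Lemma~\ref{club}, once we recall from the construction that $f(\hat{x}) = \mu$, the greatest element of $f(X) = \mu + 1$, so that being club in $f(X) \setminus \{f(\hat{x})\}$ coincides with being club in $f(\hat{x})$.

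The main step is property $(1)$: $f(S \cap T) \cap f(X \setminus (S \cap T)) = \emptyset$. Suppose $x \in X$ with $f(x) \in f(S \cap T)$, say $f(x) = \beta$ where $\beta = f(z)$ for some $z \in S \cap T$. By Lemma~\ref{sp1}, $f^{-1}(\beta)$ has exactly one element, which must therefore be $z$; hence $x = z \in S \cap T$. Thus no point of $X \setminus (S \cap T)$ is mapped by $f$ into $f(S \cap T)$, so $f$ cleaves along $S \cap T$. With all three properties of Definition~\ref{spine} verified, $(S \cap T, f)$ is a spine for $X$.

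I expect the only genuine obstacle to have already been dispatched in the preceding lemmas — namely establishing Lemma~\ref{sp1}, that the fibre over each point of $f(S \cap T)$ is a singleton, which rests on the careful simultaneous construction of $S$ and $T$ from the two parallel choices of representatives. Given that, the argument above is essentially bookkeeping; the one point to handle with care is that the embedding claim in property $(2)$ must invoke compactness of $S \cap T \cup \{\hat{x}\}$ rather than of $S \cap T$ itself, which need not be compact.
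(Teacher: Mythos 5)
Your proposal is correct and follows essentially the same route as the paper: the paper's proof is a two-line citation of injectivity by construction together with Lemmas~\ref{closed2} and \ref{sp1} (with Lemma~\ref{club} covering the club property), and your write-up simply spells out how those lemmas yield properties (1)--(3) of Definition~\ref{spine}. No substantive difference in approach.
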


\begin{proof} By construction, $f|_{S \cap T}$ is injective. By Lemmas~\ref{closed2} and \ref{sp1}, $(S \cap T,f)$ satisfies all properties of Definition~\ref{spine}. \end{proof}

We are now ready to prove the main theorems of this paper, which we do in the next section.

\section{Results} 

In this section, we answer Questions~\ref{qone} and \ref{qtwo}, and characterize those infinite compacta that are cleavable over an ordinal.

\begin{thm} \label{thmspine} Every infinite compactum $X$ cleavable over an ordinal such that $X$ is simple with $\hat{x}$ has a spine. \end{thm}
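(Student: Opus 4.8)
The plan is to reduce Theorem~\ref{thmspine} to the two cases determined by the pseudo-cofinality of $\hat{x}$. Given $X$ infinite, compact, simple with $\hat{x}$, and cleavable over an ordinal $\lambda$, first observe that $\pcf(\hat{x})$ is well-defined by Observation~\ref{pcf}, and it is either $\omega$ or an uncountable (regular) cardinal, since by Observation~\ref{o2} and simplicity $\hat{x}$ is the sole point of the top non-empty derived set, and its pseudo-cofinality is the cofinality of $f(\hat{x})$ for any $f$ cleaving along $\{\hat{x}\}$.

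The bulk of the proof handles the case $\pcf(\hat{x}) > \omega$. Here I would simply assemble the machinery of Sections 2 and 3. Starting from $X$, carry out the \textbf{Construction of $A \subseteq X$}: pick $f$ cleaving along $I_0(X) \cup \{\hat{x}\}$, normalize so $f$ is injective there, identify $f(X)$ with $\mu+1$, choose the club $M \subseteq \mu$ (all legitimate since $\mu = \pcf(\hat{x}) \cdot(\text{something}) > \omega$), and form $A = \{x_\alpha : \alpha \in M\}$. Then, for a function $j$ cleaving along $A$ (which exists by cleavability), Lemma~\ref{pdl} gives the club $C$, the \textbf{Construction of $T \subseteq A$} yields $T$, and Lemma~\ref{closed2} shows $T \cup \{\hat{x}\}$ is closed, so by Lemma~\ref{sp23} $(T,f)$ is a semi-spine. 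Repeating this with the parallel \textbf{Construction of $\hat{A}$ and $S \subseteq \hat{A}$} and invoking Lemma~\ref{lemmaspine} produces the spine $(S \cap T, f)$ of $X$. So in this case $X$ has a spine directly.

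For the case $\pcf(\hat{x}) = \omega$, the point $\hat{x}$ has a countable neighborhood base: choosing $f$ cleaving along $\{\hat{x}\}$, $f(\hat{x})$ has countable cofinality, and pulling back a strictly increasing $\omega$-sequence cofinal in $f(\hat{x})$ gives a decreasing sequence of clopen sets $\{f^{-1}([0,\gamma_n])\}_{n<\omega}$ forming a neighborhood base at $\hat{x}$ (using that $\hat{x}$ is the only point of the top derived set, so these sets shrink to $\{\hat{x}\}$ in the relevant sense). One can then either argue directly that $X$ is first countable and scattered, hence by a known structure result homeomorphic to a countable successor ordinal (and so trivially has a spine, e.g. take $f$ to be a homeomorphism onto an ordinal and $S = X \setminus \{\hat{x}\}$), or, more in keeping with this paper, observe that $X$ has property $(D)$ via this neighborhood base and apply Theorem~\ref{D}; either way an injective continuous map onto an ordinal is obtained, from which $(X \setminus \{\hat{x}\}, \text{that map})$ is a spine as noted after Definition~\ref{spine}.

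The main obstacle is purely bookkeeping: one must check that the constructions of Section 2 genuinely apply to an arbitrary such $X$ — in particular that a function $j$ cleaving along $A$ exists (immediate from cleavability over $\lambda$), that the normalization of $f$ on $I_0(X) \cup \{\hat{x}\}$ to be injective does not disturb anything needed later, and that the club $C$ of Lemma~\ref{pdl} can be reused verbatim in the parallel construction of $S$ from $\hat{A}$ so that $S \cap T$ is controlled by the \emph{same} indexing. Since all of these were set up generically in Sections 2–3, the proof of Theorem~\ref{thmspine} should consist of little more than citing \textbf{Construction of $A$}, \textbf{Construction of $T$}, \textbf{Construction of $\hat{A}$ and $S$}, and Lemma~\ref{lemmaspine} in the uncountable case, and Theorem~\ref{D} (or a first-countability argument) in the countable case.
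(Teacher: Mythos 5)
Your uncountable case is fine and is exactly what the paper does: when $\pcf(\hat{x}) > \omega$ the theorem is just a citation of the Section 2--3 machinery culminating in Lemma~\ref{lemmaspine}, so $(S \cap T, f)$ is the spine. The gap is in your $\pcf(\hat{x}) = \omega$ case. From a function cleaving along $\{\hat{x}\}$ you do get a countable clopen neighborhood base at $\hat{x}$ (your sets should be $f^{-1}((\gamma_n, f(\hat{x})])$ rather than $f^{-1}([0,\gamma_n])$, but that is a slip, not the problem). What does not follow is either of your two proposed continuations. First countability at the single point $\hat{x}$ does not make $X$ first countable, let alone countable or homeomorphic to a countable successor ordinal: $\pcf(\hat{x}) = \omega$ only controls the top point, and $X$ may perfectly well contain points of uncountable character below $\hat{x}$ (think of a space of the shape $\omega_1 \cdot \omega + 1$, whose top point has cofinality $\omega$ while copies of $\omega_1 + 1$ sit underneath it). Likewise, property $(D)$ as used in Theorem~\ref{D} is a condition on every point of $X$, so a base of the required kind at $\hat{x}$ alone does not let you invoke that theorem; and concluding at this stage that $X$ admits an injective continuous map onto an ordinal is essentially assuming the main theorem of the paper, which is only obtained later through the inductive argument of Theorem~\ref{amain}.

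The correct handling of this case is much more local, and is what the paper does: let $f$ cleave along $I_0(X)$, normalize so that $f^{-1}(f(x)) = \{x\}$ for each isolated $x$ (possible since fibers are finite), and use the countable clopen base at $\hat{x}$ to pick isolated points $a_n$ converging solely to $\hat{x}$. Then $f$ cleaves along $\{a_n : n \in \omega\}$ because of the normalization, $f$ restricted to this set is an embedding, and $f(\{a_n : n \in \omega\})$ is club in $f(\hat{x})$ by continuity and compactness; so $(\{a_n : n \in \omega\}, f)$ is a spine. No structural claim about all of $X$ is needed, which is the point you are missing. (Also, you should dispose of $\pcf(\hat{x}) < \omega$ explicitly, as the paper does, rather than asserting without argument that the pseudo-cofinality is $\omega$ or uncountable.)
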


\begin{proof} If $\pcf(\hat{x}) > \omega$, then following from Lemma~\ref{lemmaspine}, we know we may find a spine for $X$. Now assume $\pcf(\hat{x}) \leq \omega$. If $\pcf(\hat{x}) < \omega$, then $X$ must be finite, and is thus homeomorphic to a finite ordinal. If $\pcf(\hat{x}) = \omega$, then let $f$ cleave along $I_0(X)$. Since $f$ cleaves along all of the isolated points of $X$, we may assume without loss of generality by compactness that for every elements $x \in I_0(X)$, $f^{-1}(f(x)) = x$. (Since $|f^{-1}(f(x))|$ must at least be finite, we would otherwise be able to modify $f$ such that this assumption holds.) As $\pcf(\hat{x}) = \omega$, we may find a countable sequence $\left\langle a_n \right\rangle$ of isolated points that converges solely to $\hat{x}$. The pair $(\left\{a_n: n \in \omega \right\}, f)$ obviously satisfies all properties of Definition~\ref{spine}, and is therefore a spine for $X$. \end{proof}

\begin{thm} \label{hthmspine} Every infinite simple compactum $X$ cleavable over an ordinal hereditarily has a spine. \end{thm}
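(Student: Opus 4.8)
The plan is to reduce this statement to Theorem~\ref{thmspine}. Unwinding Definition~\ref{hspine}, what must be shown is that every closed, infinite, simple $A \subseteq X$ has a spine; and such an $A$ will turn out to satisfy the hypotheses of Theorem~\ref{thmspine} in its own right, so that its spine is handed to us directly.

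The one observation needed is that cleavability over an ordinal passes to subspaces. If $X$ is cleavable over an ordinal $\lambda$ and $A \subseteq X$, then $A$ is cleavable over $\lambda$: given $B \subseteq A$, pick a continuous $f : X \to \lambda$ witnessing that $X$ cleaves along $B$ (viewed as a subset of $X$), so that $f(B) \cap f(X \setminus B) = \emptyset$; since $A \setminus B \subseteq X \setminus B$ we have $f(B) \cap f(A \setminus B) = \emptyset$, and $f|_A : A \to \lambda$ is the desired continuous cleaving map. Here $\lambda$ is infinite, as $X$ is an infinite compactum cleavable over it.

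Now I would fix a closed, infinite, simple $A \subseteq X$, and let $\hat a$ be the unique point of $A^{\beta}$, where $\CB(A) = \beta + 1$, so that $A$ is simple with $\hat a$. As a closed subspace of the compactum $X$, $A$ is itself a compactum; by the previous paragraph it is cleavable over the same infinite ordinal $\lambda$. Hence $A$ meets all the hypotheses of Theorem~\ref{thmspine}, and therefore $A$ has a spine. Since $A$ was an arbitrary closed, infinite, simple subset of $X$, this shows $X$ hereditarily has a spine.

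There is essentially no obstacle here: all the substance has already been invested in Theorem~\ref{thmspine}, and the only point to verify — that restricting a cleaving function to a subspace still cleaves — is immediate from $A \setminus B \subseteq X \setminus B$ together with the fact that closed subsets of compacta are compact.
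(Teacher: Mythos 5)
Your proposal is correct and matches the paper's argument, which is exactly the one-line observation that every closed, infinite, simple $A \subseteq X$ is itself a compactum cleavable over the same ordinal (via restriction of the cleaving maps), so Theorem~\ref{thmspine} applies. You have simply spelled out the restriction argument that the paper leaves implicit.
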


\begin{proof} Every closed, infinite, simple $A \subseteq X$ is also cleavable over an ordinal. \end{proof}

\begin{thm} \label{aamain} If $X$ is an infinite, simple, compactum cleavable over an ordinal, then $X$ is homeomorphic to an ordinal. \end{thm}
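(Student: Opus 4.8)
The plan is to combine the structural results established earlier in the paper. By Theorem~\ref{hthmspine}, every infinite simple compactum $X$ cleavable over an ordinal hereditarily has a spine. By Theorem~\ref{amain}, any compact space cleavable over an ordinal that hereditarily has a spine is homeomorphic to an ordinal. So the statement follows immediately by chaining these two facts.

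\begin{proof} By Theorem~\ref{hthmspine}, $X$ hereditarily has a spine. Since $X$ is compact and cleavable over an ordinal, Theorem~\ref{amain} applies, and we conclude that $X$ is homeomorphic to an ordinal. \end{proof}

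The only subtlety worth flagging is the bookkeeping around the word ``simple'': Theorem~\ref{amain} is stated for an arbitrary compact $X$ cleavable over an ordinal, but its proof immediately reduces (via Lemma~\ref{scattered} and Observation~\ref{o2}) to the case where $X$ is simple with some point $\hat{x}$, which is exactly the hypothesis here, so there is no gap. The genuinely hard work — constructing the semi-spine via the pressing-down argument in Section~2, and upgrading it to a full spine via Lemma~\ref{not2} and the $S\cap T$ construction in Section~3 — has already been carried out, so at this point the main obstacle is simply verifying that the hypotheses of the two invoked theorems are met verbatim, which they are.
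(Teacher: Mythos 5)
Your proof is correct and is essentially identical to the paper's own argument: the paper likewise proves Theorem~\ref{aamain} by citing Theorem~\ref{hthmspine} to get that $X$ hereditarily has a spine and then applying Theorem~\ref{amain}. Your remark about the reduction to the simple case inside the proof of Theorem~\ref{amain} matches how the paper handles it as well.
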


\begin{proof} By Lemma~\ref{hthmspine}, $X$ must hereditarily have a spine. By Lemma~\ref{amain}, $X$ must therefore be homeomorphic to an ordinal. \end{proof}

Me may now give a definitive answer to Question~\ref{qtwo} for cleavability over ordinals:

\begin{thm}\label{othm} If $X$ is an infinite compactum cleavable over an ordinal, then $X$ is homeomorphic to an ordinal, and is therefore a LOTS. \end{thm}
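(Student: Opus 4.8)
The plan is to reduce the general case to the ``simple'' case already handled in Theorem~\ref{aamain}. Given an infinite compactum $X$ cleavable over an ordinal, Lemma~\ref{scattered} tells us $X$ is scattered, and since $X$ is compact, Observation~\ref{o2} gives $\CB(X) = \beta + 1$ for some ordinal $\beta$, with $X^{\beta}$ a finite set, say of cardinality $n$. The characteristic of $X$ is then $(\beta, n)$. If $n = 1$, then $X$ is simple and we are done immediately by Theorem~\ref{aamain}. So the real content is to peel off the case $n > 1$.

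The key step is to decompose $X$ into finitely many clopen pieces, each of which is simple. The standard fact here is that for a compact scattered space $X$ with $X^{\beta} = \{\hat{x}_1, \dots, \hat{x}_n\}$ finite, one can choose pairwise disjoint clopen neighborhoods $U_1, \dots, U_n$ of the points $\hat{x}_1, \dots, \hat{x}_n$ whose union is all of $X$; this uses that $X$ is compact Hausdorff zero-dimensional (scattered compact Hausdorff spaces are zero-dimensional) together with the fact that the finitely many top-level points can be separated by clopen sets, and any leftover clopen complement, being compact scattered of rank $\le \beta$ with empty $\beta$-th derived set, can be absorbed. Then each $U_i$ is clopen in $X$, hence compact; each $U_i$ is cleavable over an ordinal (a clopen subspace of a cleavable compactum is cleavable over the same ordinal, since we can extend any continuous map on $U_i$ to $X$ by sending the complement to a single point far away, or simply restrict cleaving functions — one checks cleavability is inherited by clopen, indeed by closed, subspaces, as already used implicitly in Theorem~\ref{hthmspine}); and each $U_i$ satisfies $U_i^{\beta} = \{\hat{x}_i\}$, so $U_i$ is infinite, simple with $\hat{x}_i$ (it must be infinite since its $\beta$-th derived set is nonempty with $\beta \ge 1$, as $X$ is infinite).

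By Theorem~\ref{aamain}, each $U_i$ is then homeomorphic to an ordinal; being compact, $U_i \cong \omega^{\beta} \cdot m_i + 1$ for some finite $m_i \ge 1$ (this is the form of a compact simple scattered space of rank $\beta+1$, consistent with Theorem~\ref{D}). Since $X = U_1 \sqcup \dots \sqcup U_n$ is a topological disjoint sum of spaces each homeomorphic to a compact ordinal of the form $\omega^{\beta}\cdot m_i + 1$, the sum is homeomorphic to $\omega^{\beta} \cdot (m_1 + \dots + m_n) + 1$, which is an ordinal. (A finite disjoint union of the compact ordinals $\omega^{\beta}\cdot m_i + 1$ is order-isomorphic, hence homeomorphic, to $\omega^{\beta}\cdot\sum m_i + 1$ by concatenating them in order.) Alternatively, one can invoke Theorem~\ref{D} directly: $X$ has characteristic $(\beta, n')$ where $n' = \sum m_i$, and $X$ inherits property $(D)$ from the pieces $U_i$ (each $U_i \cong \omega^\beta \cdot m_i + 1$ has property $(D)$, and a finite clopen partition into spaces with property $(D)$ yields property $(D)$ after arranging the pieces), so $X \cong \omega^{\beta}\cdot n' + 1$. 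Either way $X$ is homeomorphic to an ordinal, and every ordinal with the order topology is a LOTS, giving the final clause.

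The main obstacle I anticipate is the clopen decomposition step: verifying carefully that a compact scattered $X$ with finite top derived set $X^{\beta} = \{\hat{x}_1,\dots,\hat{x}_n\}$ genuinely splits as a disjoint union of clopen simple pieces, and that ``simple with $\hat{x}_i$'' is correctly inherited (in particular that no $U_i$ is accidentally finite or has the wrong rank). This requires the zero-dimensionality of compact scattered Hausdorff spaces and a small argument that the $\hat{x}_i$ can be separated by clopen sets covering $X$; once that is in hand, everything else is bookkeeping with ordinal arithmetic and appeals to Theorem~\ref{aamain} and Theorem~\ref{D}.
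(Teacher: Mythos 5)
Your proposal is correct and follows essentially the same route as the paper: partition $X$ into finitely many clopen pieces, each containing exactly one point of the last non-empty derived set $X^{\beta}$, apply Theorem~\ref{aamain} to each (necessarily infinite, simple, cleavable) piece, and reassemble the resulting compact ordinals into a single ordinal via Theorem~\ref{D} or direct concatenation. You simply spell out the details the paper leaves implicit (zero-dimensionality, inheritance of cleavability by subspaces, and the ordinal arithmetic), with only the harmless quirk that simplicity of each piece actually forces $m_i=1$ in your normal form.
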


\begin{proof} The last non-empty derived set of $X$, call it $X^{\beta}$, must have finitely many elements. We may partition $X$ into finitely many clopen sets, such that each clopen set contains one element of $X^{\beta}$. Each clopen set, by Theorem~\ref{aamain}, must be homeomorphic to an ordinal; in fact, following from Theorem $2$ in \cite{D}, they must all be homeomorphic to the same ordinal. Therefore if $\lambda$ is the ordinal to which they are all homeomorphic, $X$ will be homeomorphic to $\lambda \cdot n + 1$, where $n = |X^{\beta}|$. \end{proof}

We may also improve the specificity of these results, and in Corollary~\ref{limit} we do so. However, before we state and prove Corollary~\ref{limit}, we must provide some new lemmas and definitions. Lemma~\ref{bumpuprank}, Definition~\ref{rdef}, and Theorem~\ref{CBrank} are the work of Richard Lupton, and the author is grateful for his help.

\begin{lemma} \label{bumpuprank} Suppose $X$ is scattered and $A \subseteq X$ with $x \in \overline{A} \setminus A$. Then $\textrm{CB}^*(x)$ must be greater than $\inf_{a \in A} ( \CB^*(a))$. \end{lemma}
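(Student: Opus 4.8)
The plan is to set $\mu = \inf_{a\in A}\CB^*(a)$ and prove directly that $x\in X^{\mu+1}$, which immediately yields $\CB^*(x)\ge \mu+1>\mu$, as desired. So everything reduces to locating $x$ one derived-set level above the common level $\mu$ of the points of $A$.

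The first step is to observe that $A\subseteq X^{\mu}$. Indeed, the transfinite sequence $\langle X^{\alpha}\rangle$ is decreasing (nested at successors by $X^{\beta+1}=(X^{\beta})'\subseteq X^{\beta}$, an intersection at limits), so for each $a\in A$ we have $\CB^*(a)\ge\mu$ and hence $a\in X^{\CB^*(a)}\subseteq X^{\mu}$; note it is irrelevant whether the infimum is attained. The second step is the limit-point computation: since $x\in\overline{A}\setminus A$, every open neighbourhood $U$ of $x$ contains some $a\in A$, and $a\ne x$ because $x\notin A$; as $a\in X^{\mu}$, every neighbourhood of $x$ meets $X^{\mu}\setminus\{x\}$, i.e. $x\in (X^{\mu})'=X^{\mu+1}$ by the very definition of the derived set. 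Therefore $\CB^*(x)\ge\mu+1>\mu=\inf_{a\in A}\CB^*(a)$.

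The statement is vacuous when $A=\emptyset$ (then $\overline{A}\setminus A=\emptyset$), so no case split is needed there. The only point deserving a word of care is the monotonicity of $\langle X^{\alpha}\rangle$ used in the first step; this is routine, and in our setting it is unproblematic since $X$ is $T_{1}$ (being a compactum cleavable over an ordinal). Crucially, the second step needs \emph{only} the defining property of $(X^{\mu})'$ and not the closedness of $X^{\mu}$, so there is no genuine obstacle here: the whole content is that $\overline{A}$ cannot leave $X^{\mu}$, while a genuinely new limit point $x$ is forced into the next derived set $X^{\mu+1}$.
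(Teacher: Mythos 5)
Your proof is correct and follows essentially the same route as the paper: show $A \subseteq X^{\mu}$ where $\mu = \inf_{a \in A}\CB^*(a)$, and then use $x \in \overline{A}\setminus A$ to place $x$ in $(X^{\mu})' = X^{\mu+1}$, giving $\CB^*(x) \geq \mu+1$. The only (immaterial) difference is that the paper first notes $x \in X^{\mu}$ via closedness of $X^{\mu}$ before observing $x$ is not isolated there, whereas you go directly to $x \in (X^{\mu})'$.
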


\begin{proof} Let $\alpha = \inf_{a \in A} \CB^*(a)$. Then $A \subseteq X^\alpha$, and, since $X^\alpha$ is closed, $\overline{A} \subseteq X^\alpha$. In particular, $x \in X^\alpha$. We know $x$ is not isolated in $X^\alpha$, since $x$ is not an element of $A$, but every open set about $x$ must have non-empty intersection with $A$. So $x \in X^{\alpha + 1}$, and hence $\CB^*(x) \geq \alpha+1$. \end{proof}

\begin{definition} \label{rdef} We say that an ordinal $\alpha$ is an \textbf{even ordinal} if $\alpha$ is of the form $\lambda + (2 \cdot n)$, where $\lambda$ is a (necessarily unique) limit ordinal and $n$ is a natural number. Let us denote the class of even ordinals by $\mathbb{EON}$. \end{definition}

\begin{thm}\label{CBrank} Suppose $X$ and $Y$ are scattered and $X$ cleaves over $Y$. Then $\CB(X) \leq \CB(Y)$. \end{thm}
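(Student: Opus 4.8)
The plan is to prove this by transfinite induction on $\CB(X)$, using the key mechanism that cleaving preserves the non-isolation witnessed by a subset whose image is forced to accumulate. Let $f: X \to Y$ be a function witnessing cleavability. I claim the stronger pointwise statement: for every $x \in X$, $\CB^*(x) \le \CB^*(f(x))$ in $Y$ — or rather, since $f$ need not be injective and different $A$'s require different cleaving maps, I would instead argue that for each $x \in X$ and each ordinal $\alpha$, if $x \in X^\alpha$ then one can exhibit (using a map that cleaves along a suitable set) a point of $Y$ of Cantor--Bendixson rank at least $\alpha$. Taking $\alpha$ just below $\CB(X)$ (so $X^\alpha \ne \emptyset$) then forces $Y^\alpha \ne \emptyset$, whence $\CB(Y) > \alpha$ for all such $\alpha$, giving $\CB(X) \le \CB(Y)$.

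The inductive step is where Lemma~\ref{bumpuprank} does the work. Suppose $x \in X^{\alpha+1}$, so $x$ is a non-isolated point of $X^\alpha$; pick a set $A \subseteq X^\alpha \setminus \{x\}$ with $x \in \overline{A}$, and let $g$ cleave $X$ along $A$ (equivalently along $X \setminus A$, putting $x$ on the $X\setminus A$ side). Then $g(x) \notin g(A)$, and by continuity $g(x) \in \overline{g(A)}$. By the inductive hypothesis applied to the points of $A$ — each of which has CB-rank $\ge \alpha$ in $X$ — the image $g(A)$ lies in $Y^\alpha$ (one must check $g(A) \subseteq Y^\alpha$, using that $\CB^*(a) \ge \alpha$ transfers to $\CB^*(g(a)) \ge \alpha$, which is exactly the induction hypothesis phrased pointwise, and that $Y^\alpha$ is closed). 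Since $g(x) \in \overline{g(A)} \setminus g(A)$ and $g(A) \subseteq Y^\alpha$, Lemma~\ref{bumpuprank} applied inside $Y$ gives $\CB^*(g(x)) \ge \alpha+1$. The limit case is immediate: if $x \in X^\lambda = \bigcap_{\alpha<\lambda} X^\alpha$, then for each $\alpha < \lambda$ we get a point of rank $\ge \alpha$ in $Y$, so $Y^\alpha \ne \emptyset$ for all $\alpha < \lambda$, hence $Y^\lambda \ne \emptyset$ and ranks $\ge \lambda$ are realized; feeding this back through the successor step handles $x \in X^{\lambda+1}$.

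The main obstacle is bookkeeping around the fact that cleavability gives a \emph{different} continuous map for each target subset $A$, so there is no single $f$ carrying the whole Cantor--Bendixson filtration of $X$ into that of $Y$; the pointwise induction hypothesis must be stated carefully as "for every $x$ with $\CB^*(x) \ge \alpha$ and every set $A$ separating $x$ from a witnessing sequence, the relevant cleaving map sends that sequence into $Y^\alpha$." Concretely one should phrase the induction as: \emph{for all $\alpha$, if $X^\alpha \ne \emptyset$ then $Y^\alpha \ne \emptyset$, and moreover whenever $B \subseteq X^\alpha$ and $h$ cleaves $X$ along $B$, then $h(B) \subseteq Y^\alpha$}. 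The first clause is what we ultimately want (it yields $\CB(X) \le \CB(Y)$ since $X^\alpha \ne \emptyset$ for all $\alpha < \CB(X)$, forcing $Y^\alpha \ne \emptyset$ for all such $\alpha$); the second clause is the strengthening that makes the induction go through, and it is proved using continuity of $h$ together with the closedness of $Y^\alpha$ and Lemma~\ref{bumpuprank} exactly as above. One should also note the degenerate case where $A$ is empty or cofinite is handled trivially, and that compactness is not needed here — only scatteredness of both spaces, which is assumed.
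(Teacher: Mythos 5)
There is a genuine gap. You correctly identify the central difficulty---cleavability supplies a \emph{different} continuous map for each subset, so a pointwise rank induction needs some device for transferring lower-rank information along the one map currently in hand---but the strengthening you introduce to close the induction is false. Your second clause asserts: whenever $B \subseteq X^\alpha$ and $h$ cleaves $X$ along $B$, then $h(B) \subseteq Y^\alpha$. Take $X = Y = \omega^2+1$, $\alpha = 2$, $B = X^2 = \{\omega^2\}$, and define $h$ by $h(x) = 0$ for $x \in [0,\omega]$, $h(x) = n$ for $x \in (\omega \cdot n, \omega \cdot (n+1)]$, and $h(\omega^2) = \omega$. This $h$ is continuous and cleaves along $B$ (no point other than $\omega^2$ is sent to $\omega$), yet $h(B) = \{\omega\} \not\subseteq Y^2$. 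The same collapse defeats the restricted form you actually invoke in the successor step: with $X = Y = \omega^3+1$, $x = \omega^3$, and $A = \{\omega^2 \cdot n : n \geq 1\} \subseteq X^2$ accumulating at $x$, one can cleave along $A$ by a continuous $g$ with $g(\omega^2 \cdot n) = \omega \cdot n$, each block $(\omega^2\cdot(n-1), \omega^2\cdot n)$ sent into the isolated ordinals below $\omega \cdot n$, and $g(\omega^3) = \omega^2$; then $g(A) \subseteq Y^1 \setminus Y^2$, so Lemma~\ref{bumpuprank} yields only $\CB^*(g(x)) \geq 2$ rather than the needed $3$. The point is that a map required only to cleave along a high-level set is free to flatten all the lower-level structure sitting on the other side, so rank does not transfer along it, and your first clause at level $\alpha+1$ cannot be derived from your (false) second clause at level $\alpha$.

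The paper's proof supplies exactly the missing idea: fix \emph{one} cleaving set that encodes the entire Cantor--Bendixson filtration, namely $A = \bigcup_{\alpha \in \mathbb{EON}} (X^\alpha \setminus X^{\alpha+1})$, the union of the even levels, and let a single $f$ cleave along it. Parity guarantees that every point of rank $\alpha+1$ lies on the opposite side of $A$ from the level $X^\alpha \setminus X^{\alpha+1}$ immediately below it (and, at limit ranks, from suitable odd levels cofinally often), so the pointwise induction $\CB^*(x) \leq \CB^*(f(x))$ runs for this one map: the inductive hypothesis, being a statement about the same $f$, genuinely gives $f(X^\alpha \setminus X^{\alpha+1}) \subseteq Y^\alpha$, and Lemma~\ref{bumpuprank} finishes the step. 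Your outline needs this single-map parity device (or some correct substitute for the false transfer clause); as written, the induction does not close.
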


\begin{proof} Let $f : X \rightarrow Y$ cleave along $A$, where $A = \bigcup_{\alpha \in \mathbb{EON}} (X^\alpha \setminus X^{\alpha+1}).$ We show by transfinite induction on $\CB^*(x)$, that for each $x \in X$, $\CB^*(x) \leq \CB^*(f(x))$. It follows that $\CB(X) \leq \CB(Y)$.

The base case is clear since $0 \leq \CB^*(f(x))$ for all $x \in X$. Let us now suppose, as an inductive hypothesis, that for $x$ with $\CB^*(x) \leq \alpha$ we have $\CB^*(x) \leq \CB^*(f(x))$. Suppose $x$ satisfies $\CB^*(x) = \alpha+1$. Clearly $\left ( \alpha + 1 \in \mathbb{EON} \leftrightarrow \alpha \not\in \mathbb{EON} \right )$, so $f(x) \not\in f \left ( X^{\alpha} \setminus X^{\alpha+1} \right )$. However, $x \in \overline{X^{\alpha} \setminus X^{\alpha+1}}$, so by continuity of $f$, $f(x) \in \overline{f \left ( X^{\alpha} \setminus X^{\alpha+1} \right )}$. Therefore, by Lemma~\ref{bumpuprank}:

$$\CB^*(f(x)) > \inf_{y \in (X^{\alpha} \setminus X^{\alpha+1})} \CB^*(f(y)) \geq \alpha $$

\noindent where the last inequality is from our inductive hypothesis. Hence $\CB^*(f(x)) \geq \alpha + 1 = \CB^*(x)$.

Finally, suppose $\lambda$ is a limit ordinal and, as an inductive hypothesis, for any $x \in X$ with $\CB^*(x) < \lambda$ we have $\CB^*(x) \leq \CB^*(f(x))$. Observe that $\lambda \in \mathbb{EON}$. Suppose $x \in X^\lambda \setminus X^{\lambda + 1}$, so $\CB^*(x) = \lambda$. Suppose $\alpha < \lambda$. As $\lambda$ is a limit ordinal, there is an ordinal $\beta$ with $\beta \not\in \mathbb{EON}$, and $\alpha \leq \beta < \lambda$ (one of $\alpha$ or $\alpha + 1$ will work). In particular, $f(x) \not\in f \left ( X^\beta \setminus X^{\beta+1} \right )$. Nonetheless, $x \in \overline{X^\beta \setminus X^{\beta+1}}$, hence by continuity of $f$, $f(x) \in \overline{f \left ( X^\beta \setminus X^{\beta+1} \right )}$. By Lemma~\ref{bumpuprank} and inductive hypothesis, 
$$\CB^*(f(x)) > \textnormal{inf}_{y \in \left ( X^\beta \setminus X^{\beta+1} \right )} \CB^*(f(y)) \geq \beta \geq \alpha.$$
Since $\alpha < \lambda$ was arbitrary, $\CB^*(f(x)) \geq \lambda$, concluding the induction. \end{proof}

From Theorem~\ref{CBrank}, we have the following:

\begin{cor} \label{limit} If $\mu$ is the least ordinal over which an infinite compactum $X$ is cleavable, then $X$ must be homeomorphic to an ordinal less than $\mu \cdot \omega$. \end{cor}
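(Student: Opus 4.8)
The plan is to pin down the homeomorphism type of $X$ using the results already established, compare Cantor--Bendixson ranks via Theorem~\ref{CBrank}, and finish with a short ordinal computation. For the first step: by Theorem~\ref{othm} (together with Theorem~\ref{D}, since a space homeomorphic to an ordinal has property $(D)$), $X$ is homeomorphic to an ordinal of the form $\omega^{\gamma}\cdot n + 1$, where $(\gamma,n)$ is the characteristic of $X$; since $X$ is infinite we have $\gamma \geq 1$, and $\CB(X) = \gamma+1$.

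Next I would apply Theorem~\ref{CBrank}. The space $X$ is scattered by Lemma~\ref{scattered}, every ordinal is scattered, and $X$ cleaves over $\mu$, so $\CB(X) \leq \CB(\mu)$, and hence $\CB(\mu) \geq \gamma+1$, i.e. $\mu^{(\gamma)} \neq \emptyset$. The crucial observation is that this forces $\mu > \omega^{\gamma}$: it is standard (provable by transfinite induction on $\gamma \geq 1$) that $(\omega^{\gamma})^{(\gamma)} = \emptyset$, and if $\nu < \omega^{\gamma}$ then the ordinal space $\nu$ is an open subspace of $\omega^{\gamma}$, so $\nu^{(\gamma)} = \nu \cap (\omega^{\gamma})^{(\gamma)} = \emptyset$. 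Thus no ordinal $\nu \leq \omega^{\gamma}$ has non-empty $\gamma$-th derived set, and therefore $\mu \geq \omega^{\gamma}+1$.

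Finally, since ordinal multiplication is non-decreasing and continuous in its left argument, $\mu\cdot\omega \geq (\omega^{\gamma}+1)\cdot\omega = \sup_{k<\omega}(\omega^{\gamma}\cdot k + 1) = \omega^{\gamma+1}$, whereas $X \cong \omega^{\gamma}\cdot n + 1 < \omega^{\gamma}\cdot(n+1) \leq \omega^{\gamma}\cdot\omega = \omega^{\gamma+1}$. Hence $X$ is homeomorphic to an ordinal strictly less than $\mu\cdot\omega$, as required. The step I expect to require the most care is the implication $\mu^{(\gamma)} \neq \emptyset \Rightarrow \mu > \omega^{\gamma}$: the relation between the Cantor--Bendixson rank of an ordinal and its magnitude is slightly delicate for limit ordinals (for instance $\CB(\omega^{2}) = 2$ while $\CB(\omega^{2}+\omega) = 3$), and the cleanest route is the open-subspace identity $U^{(\alpha)} = U \cap Z^{(\alpha)}$ combined with the computation $(\omega^{\gamma})^{(\gamma)} = \emptyset$. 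Note that the minimality of $\mu$ is not actually used: the argument applies to any ordinal over which $X$ is cleavable.
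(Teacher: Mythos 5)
Your proof is correct and follows the same route the paper intends: the paper states the corollary as an immediate consequence of Theorem~\ref{CBrank} without writing out the details, and your argument (identify $X$ with $\omega^{\gamma}\cdot n+1$ via Theorems~\ref{othm} and \ref{D}, deduce $\mu^{(\gamma)}\neq\emptyset$ hence $\mu>\omega^{\gamma}$ from $\CB(X)\leq\CB(\mu)$, then compute $\mu\cdot\omega\geq\omega^{\gamma+1}>\omega^{\gamma}\cdot n+1$) is a correct and complete elaboration of exactly that derivation. Your closing remark that minimality of $\mu$ is never used is also accurate.
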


\section{Conclusions and Open Questions}

We have now shown that if $X$ is an infinite compactum cleavable over an ordinal $\lambda$, then $X$ is homeomorphic to an ordinal and is therefore a LOTS. If either $X$ or $\lambda$ is countable, then from the results within [Levine, \textit{Cleavability and scattered first-countable LOTS}, unpublished], we know the specific conditions under which $X$ is embeddable into $\lambda$. The following, however, is still an open question:

\begin{question}If $X$ is an infinite compactum cleavable over an uncountable ordinal $\lambda > \omega_1$, must we have that $X$ is embeddable into $\lambda$? \end{question}

While we do not have a definitive answer for this question, we do know the following:

\begin{lemma} If $\beta$ is an uncountable regular ordinal, with $\cf(\beta) > \omega$, then $\beta \cdot j + 1$, where $j \in \left\{2,3,4 \right\}$ is not cleavable over $\beta \cdot k + 1$, where $k < j$. \end{lemma}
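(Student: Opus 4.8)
The plan is to prove, for each relevant pair with $1\le k<j\le 4$, that there is a single subset $A$ of $X:=\beta\cdot j+1$ admitting no continuous cleaving map into $Y:=\beta\cdot k+1$; since cleavability asks for a cleaving along \emph{every} subset, exhibiting one such $A$ suffices. Write $s_1<\cdots<s_j$ for the points $\beta,2\beta,\ldots,j\beta$ of $X$ and $t_1<\cdots<t_k$ for $\beta,2\beta,\ldots,k\beta$ of $Y$; since $\beta$ is regular and uncountable, these are exactly the points of cofinality $\beta$ in $X$ and in $Y$, and $t_k=\max Y$.

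First I would set up the forcing mechanism. For each $i$ fix $D_i$, a club of order type $\beta$ cofinal in $s_i$, lying in $[0,s_1)$ when $i=1$ and in $(s_{i-1},s_i)$ otherwise, so that $D_i\cup\{s_i\}$ is a closed copy of $\beta+1$ in $X$ with top $s_i$ and the sets $D_i\cup\{s_i\}$ are pairwise disjoint. I will always build $A$ so that $s_i$ and $D_i$ lie on opposite sides of $\{A,\,X\setminus A\}$ for every $i$. Then for any continuous $f:X\to Y$ cleaving along $A$ we have $f(s_i)\notin f(D_i)$ while $s_i\in\overline{D_i}$, so $f(s_i)$ is a genuine limit point of $f(D_i)$ approached from below; the standard continuity-of-cofinality argument (as in the proof of Observation~\ref{pcf} and Lemma~\ref{pdl}) then forces $\cf(f(s_i))=\cf(\beta)=\beta$, hence $f(s_i)\in\{t_1,\ldots,t_k\}$. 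Moreover $f(D_i\cup\{s_i\})$ is a continuous image of a compactum, hence closed in $Y$, so $f(D_i)$ is closed below $f(s_i)$ and cofinal in it, i.e. $f(D_i)$ is a \emph{club} in the point $f(s_i)$, and $f(D_i)$ lies in the image of whichever side of $A$ is opposite $s_i$. Since $j>k$, pigeonholing the values $f(s_1),\ldots,f(s_j)$ among $t_1,\ldots,t_k$ yields $a<b$ with $f(s_a)=f(s_b)=t$ for some $t\in\{t_1,\ldots,t_k\}$.

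The finish is then a combinatorial choice of $A$. If $s_a$ and $s_b$ happen to lie on opposite sides of $A$, then $t=f(s_a)=f(s_b)\in f(A)\cap f(X\setminus A)$, contradicting that $f$ cleaves along $A$; so the only surviving danger is that the coinciding pair lies on a common side of $A$. Here I would invoke the principle underlying Lemma~\ref{left2}: two clubs in a point of uncountable cofinality must intersect. When $s_a,s_b$ are on the same side, $D_a$ and $D_b$ sit on the common opposite side, $f(D_a)$ and $f(D_b)$ are clubs in $t$, so they meet; combining this with the clubs $f(D_c)$ attached to the remaining high points (which then live on the other side) and with the linear order $t_1<\cdots<t_k$, $t_k=\max Y$, one forces a point into $f(A)\cap f(X\setminus A)$. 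For $k=1$ this already finishes everything, including the whole $j=2$ case: $Y$ has the unique cofinality-$\beta$ point $t_1$, so $f(s_a)=f(s_b)=t_1$ necessarily, and it suffices to take $A$ so that some $s_i$ lies in $A$ and some lies in $X\setminus A$ with the $D_i$'s placed oppositely — e.g. $A\supseteq\{s_1\}\cup D_2$ and $X\setminus A\supseteq D_1\cup\{s_2\}$, which immediately produces $t_1\in f(A)\cap f(X\setminus A)$.

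The hard part will be the three remaining cases $(j,k)\in\{(3,2),(4,2),(4,3)\}$, where a naive two-colouring of $\{s_1,\ldots,s_j\}$ always leaves exactly one ``monochromatic'' coincidence uncovered; the crux is to choose the colouring and the sets $D_i$ (allowing $D_i$ to absorb lower high points, as in the space $\beta+1+\beta+1$ treated by Lemma~\ref{left2}) so that the club-intersection argument, applied inside the top block $(t_{k-1},t_k]\cong\beta+1$ of $Y$ where a cofinal club is automatically cofinal in the whole block, still yields an element of $f(A)\cap f(X\setminus A)$. The restriction $j\le 4$ is precisely what keeps this case analysis finite: the coinciding pair together with the at most two remaining high points is a configuration small enough to handle by hand, and I expect the argument to break down, or at least to require genuinely new ideas, once $j\ge 5$.
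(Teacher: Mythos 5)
Your skeleton matches the paper's: force each $f(s_i)$ to have cofinality $\beta$, pigeonhole the $j$ points $s_1,\dots,s_j$ into the $k$ cofinality-$\beta$ points of $\beta\cdot k+1$, kill cross-side coincidences immediately, and use the fact that two clubs in a point of uncountable cofinality meet. Your $k=1$ cases are complete and correct. But there is a genuine gap exactly where you flag ``the hard part'': your standing convention that each $D_i$ lies on the side of the partition \emph{opposite} to $s_i$ makes the same-side coincidence unresolvable. If $s_a,s_b$ lie on a common side and $f(s_a)=f(s_b)=t$, then $D_a$ and $D_b$ both lie on the common opposite side, so $f(D_a)\cap f(D_b)\neq\emptyset$ contradicts nothing --- both clubs live in the same one of $f(A)$, $f(X\setminus A)$. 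Bringing in the clubs attached to the remaining high points does not help either, since those points need not share the value $t$. The cases $(3,2)$, $(4,2)$, $(4,3)$ are therefore not proved.

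The missing idea (which is the whole content of the paper's construction) is to attach \emph{two} cofinal sets to each high point of a monochromatic pair: a club $C_i$ and its (cofinal) complement in the block $(s_{i-1},s_i)$, placed on opposite sides of the partition, with the placement alternated within each pair. Concretely, for $(4,3)$ the paper takes $A=C_1\cup C_3\cup\bigl((\beta,\beta\cdot2)\setminus C_2\bigr)\cup\bigl((\beta\cdot3,\beta\cdot4)\setminus C_4\bigr)\cup\{\beta,\beta\cdot2\}$: the set opposite to $s_i$ in each block still pins down $\cf(f(s_i))=\beta$, while the only possible same-side coincidences are $f(s_1)=f(s_2)$ or $f(s_3)=f(s_4)$, and in each such pair the attached clubs ($C_1$ versus $C_2$, or $C_3$ versus $C_4$) sit on \emph{opposite} sides, so their images are intersecting clubs in $t$ lying one in $f(A)$ and one in $f(X\setminus A)$. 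With that modification your colouring argument closes all three remaining cases uniformly; without it, the proof does not go through.
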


\begin{proof} We will prove the lemma true in the case where $j = 4$ and $k = 3$. The proof may then be modified in the case where $j$ is $2$ or $3$.

Let $C$ be club in $\beta$; similarly, let $C_{m+1}$ be a copy of $C$ in $(\beta \cdot m, \beta \cdot (m+1))$, where $m+1 \in \left\{1,2,3,4\right\}$. Let $A = C_1 \cup C_3 \cup ((\beta, \beta \cdot 2)\setminus C_2) \cup ((\beta \cdot 3,\beta \cdot4) \setminus C_4) \cup \left\{\beta, \beta \cdot 2 \right\}$. That is, $A$ contains the clubs $C_1$ and $C_3$, the compliments of $C_2$ and $C_4$ in their respective intervals, and the endpoints $\beta$ and $\beta \cdot 2$. I claim no function can cleave $X$ along $A$ over $\beta \cdot 3 + 1$.

Assume for a contradiction that there does exist such an $f$. Then from the way we have chosen $A$, $f(C_{m+1})$ must be unbounded in $f(\beta \cdot (m+1))$ for every $m+1 \in \left\{1,2,3,4\right\}$. As $\beta$ is regular, this implies $f(\beta \cdot (m+1))$ must be mapped to one of the points $\beta \cdot m$, and as $A$ and $X \setminus A$ both contain two of the $\beta \cdot (m+1)$, we know $f(\beta) = f(\beta \cdot 2)$ and $f(\beta \cdot 3) = f(\beta \cdot 4)$. Furthermore, as $f$ is continuous, $X$ is compact, each $C_{m+1}$ is closed, and $\beta \cdot 3 +1$ is $T_2$, $f(C_{m+1})$ must be closed. Thus, by continuity of $f$, $f(C_1)$ and $f(C_2)$ must both be club in $f(\beta)$, and therefore $f(C_1) \cap f(C_2)$ must be non-empty. However, $A$ contains $C_1$ and $((\beta, \beta \cdot 2)\setminus C_2)$, contradicting the fact that $f$ cleaves along $A$. Thus no $f$ can cleave along $A$. \end{proof}

It seems likely, however, that the following is true:

\begin{question} If $\beta$ is an uncountable ordinal, must it be the case that $\beta \cdot n + 1$, where $n \geq 5$, is cleavable over $\beta \cdot m +1$, where $m \geq 4$, but is not embeddable unless $n \leq m$? \end{question}
 
\bibliographystyle{elsarticle-num}
\nocite{*}
\bibliography{ordinals}

\end{document}